\def\R{\mathbb{R}}
\def\d{|\nabla|}
\def\p{\partial}
\def\be{\begin{equation}}
\def\ee{\end{equation}}
\newtheorem{theorem}{Theorem}[section]
\newtheorem{lemma}[theorem]{Lemma}
\theoremstyle{definition}
\theoremstyle{remark}
\newtheorem{remark}[theorem]{Remark}
\numberwithin{equation}{section}
\begin{document}
 \title[Quadratic Schr\"odinger]{Global Solution for the $3D$ quadratic Schr\"odinger equation of $Q(u,\bar{u}$) type }
\author{Xuecheng Wang}
\address{Mathematics Department, Princeton University, Princeton, New Jersey,08544, USA}
\email{xuecheng@math.princeton.edu}
\thanks{}
\maketitle
\begin{abstract}
We study a class of $3D$ quadratic Schr\"odinger equations as follows, $(\p_t -i \Delta) u = Q(u, \bar{u})$.  Different from nonlinearities of the $uu$ type  and the $\bar{u}\bar{u}$ type, which have been studied by Germain-Masmoudi-Shatah in \cite{Germain}, the interaction of $u$ and $\bar{u}$ is very strong  at the low frequency part, e.g., $1\times 1 \rightarrow 0$ type interaction (the size of input frequency is ``$1$'' and the size of output frequency is ``$0$''). It creates a growth mode for the Fourier transform of the profile of solution around a small neighborhood of zero. This growth mode will  again cause   the growth of profile in the medium frequency part due to the $1\times 0\rightarrow 1$ type interaction. The issue of strong $1\times 1\rightarrow 0$ type interaction makes the global existence problem very delicate.

In this paper,   we show that, as long as there are ``$\epsilon$'' derivatives inside the quadratic term $Q (u, \bar{u})$, there exists a global solution for small initial data. As a byproduct,  we also give a simple proof for the almost global existence of the small data solution of $(\p_t -i \Delta)u = |u|^2 = u\bar{u}$, which was first proved by Ginibre-Hayashi \cite{GH}. Instead of using vector fields, we consider this problem purely in Fourier space.
\end{abstract}
\setcounter{tocdepth}{1}

\section{Introduction}
We consider the initial value problem (IVP) as follows,
\begin{equation}\label{mainequation}
\left\{ \begin{array}{ll}
(\p_t - i \Delta) u = Q(u, \bar{u}) & u(t,x): \mathbb{R} \times \mathbb{R}^3\longrightarrow \mathbb{C},\\
\\
u(0)=u_0, & u_0 \in H^{10},\\
\end{array}\right.
\ee
where  the quadratic term $ Q(\cdot, \cdot)$ has ``$\epsilon$'' derivatives in total in the low frequency part, where $0< \epsilon \ll 1$. More precisely, for any $k,k_1,k_2\in \mathbb{Z}$, we assume that  the following estimate holds for  the symbol $q(\xi-\eta, \eta)$ of the quadratic term $Q (u, \bar{u})$,
\[
\| \mathcal{F}^{-1}\big[  q(\xi-\eta, \eta) \psi_{k_1}(\xi-\eta)\psi_{k_2}(\eta)\big] \|_{L^1} + 2^{ k}\|  \mathcal{F}^{-1}\big[ \nabla_\xi q(\xi-\eta, \eta) \psi_{k_1}(\xi-\eta)\psi_{k_2}(\eta)] \|_{L^1} \]
\[+ 2^{\min\{k_1,k_2\}}\|  \mathcal{F}^{-1}\big[ \nabla_\eta q(\xi-\eta, \eta) \psi_{k_1}(\xi-\eta)\psi_{k_2}(\eta)] \|_{L^1}  + 2^{2k} \| \mathcal{F}^{-1}\big[ \nabla_\xi^2 q(\xi-\eta, \eta) \psi_{k_1}(\xi-\eta)\psi_{k_2}(\eta)] \|_{L^1}
\]
\be\label{assumption}
+ 2^{2\min\{k_1,k_2\}} \| \mathcal{F}^{-1}\big[ \nabla_\eta^2 q(\xi-\eta, \eta) \psi_{k_1}(\xi-\eta)\psi_{k_2}(\eta)] \|_{L^1}   \lesssim  2^{\epsilon k_{-}},\quad k_{-}:=\min\{k,0\}.
\ee
 A good example of qudratic term that satisfies (\ref{assumption}) reads  as follows, $Q(u, \bar{u})= \d^\epsilon(|u|^2)/(1+\d^\epsilon)$.

The assumption of no derivatives in the high frequency part is not necessary. The method we use here still works out for the quasilinear case, as long as there are certain symmetries inside (\ref{mainequation}), which help  us to avoid losing derivatives in the energy estimate.  Since we are trying to highlight the $L^\infty$ decay estimate part, we use this assumption to make energy estimate easier. 

There is a very large literature on the small data global existence results of nonlinear Schr\"odinger equations. By using the energy estimates and the decay estimate of the linear solution, one can prove global existence for quadratic nonlinearities in dimension $4$ and higher, see, e.g., Klainermann-Ponce \cite{Klainerman1} and Strauss \cite{Strauss}. This method also works in dimension $3$ if the order of nonlinearities  is strictly greater than $2$, which is also known as a Strauss exponent in dimension $3$. 

The question of small data global existence (SDGE) for $3D$ quadratic Schr\"odinger is more subtle. On the one hand, it   depends on the type of nonlinearity. On the other hand, it also depends on the decay rate of initial data as $|x|\rightarrow \infty$. As mentioned in the abstract, SDGE is known for nonlinearities of type $uu$ or $\bar{u}\bar{u}$ or any combination of them, see Germain-Masmoudi-Shatah \cite{Germain}. For the gauge-invariant nonlinearity of  $|u|u$ type, one can also obtain global existence, see Cazenave-Weissler \cite{Cazenave}. By using the vector fields method, Ginibre and Hayashi \cite{GH} showed the almost global existence for small initial data for nonlinearities of $u\bar{u}$ type.

Although it is still not clear whether SDGE is true for the nonlinearities of type $u\bar{u}$, It is certainly clear that the answer will depend on in what sense the initial data is small. For  3D quadratic Schr\"odinger with the nonlinearity  $u \bar{u}$, Ikeda-Inui \cite{II} showed that actually  the solution blows up in polynomial time for a class of small  $L^2$ initial data, which decays at  rate $\displaystyle{\frac{1}{|x|^{2-\epsilon}}}$ as $|x|\rightarrow\infty$, where $0< \epsilon<1/2.$ Therefore,  for the validity of SDGE for general nonlinearities of $u\bar{u}$ type, initial data should decay faster than $\displaystyle\frac{1}{|x|^{2-\epsilon}}$ for all $\epsilon >0$. 

In this paper, we are trying to improve the understanding of this problem. We show that solution of (\ref{mainequation}) exists globally if there are ``$\epsilon$'' derivatives at the low frequency part of nonlinearity (i.e., (\ref{assumption})) and the initial data decays faster than $\displaystyle\frac{1}{|x|^{2+\gamma}}$ for any $\gamma$ such that $0< \gamma < \epsilon$. Before stating our main theorem, we define the $Z$-normed space as follows,
\be\label{definitionZ}
\| f\|_{Z}:= \sup_{k\in \mathbb{Z}} 2^{-\gamma  k +2k_{+}}\big( 2^{-k/2   } \| P_k f \|_{L^2} + 2^{ k/2  } \| \nabla_\xi \widehat{f}(\xi)\psi_k(\xi) \|_{L^2}+ 2^{3k/2 }\| \nabla_\xi^2 \widehat{f}(\xi)\psi_k(\xi) \|_{L^2}\big).
\ee

Our main theorem is stated as follows,
\begin{theorem}\label{maintheorem}
Fix $\epsilon$ and $\gamma$, where $0< \epsilon \ll 1$ and $\gamma < \epsilon$. Assume that the initial data $u_0 $ satisfies the following assumption, 
\be
\| u_0\|_{H^{10}} + \|  u_0\|_{Z} \leq \epsilon_0,
\ee
where $\epsilon_0$ is a sufficiently small constant. Then there exists a unique global solution for the IVP   \textup{(\ref{mainequation})} and the solution posses scattering property.  Moreover, the following estimate  holds,
\be
\sup_{t\in[0, \infty)} \| u(t)\|_{H^{10}} + \| e^{-it \Delta}  u(t)\|_{Z} + (1+t)^{1+\gamma/2} \| u(t)\|_{L^\infty} \lesssim  \epsilon_0.
\ee
\end{theorem}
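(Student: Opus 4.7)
The proof proceeds by a bootstrap argument on the combined norm
\[
\|u(t)\|_{X} := \|u(t)\|_{H^{10}} + \|f(t)\|_{Z}, \qquad f(t) := e^{-it\Delta}u(t),
\]
assuming $\|u(t)\|_{X} \leq 2\epsilon_{0}$ on an interval $[0,T]$ and improving this to $\|u(t)\|_{X}\leq\epsilon_{0}+C\epsilon_{0}^{2}$. The first preliminary is a linear dispersive estimate: writing $u(t,x)=(2\pi)^{-3}\int e^{i(x\cdot\xi-t|\xi|^{2})}\widehat{f}(t,\xi)\,d\xi$, localizing dyadically and performing stationary phase around $\xi_{0}=x/(2t)$ using the $L^{2}$ control on $\widehat{f}$, $\n_{\xi}\widehat{f}$, and $\n_{\xi}^{2}\widehat{f}$ built into the $Z$-norm, one obtains $\|u(t)\|_{L^{\infty}}\lesssim(1+t)^{-1-\gamma/2}\|f(t)\|_{Z}$. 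The $2^{-\gamma k_{-}}$ weight at low frequency in (\ref{definitionZ}) degrades the optimal $t^{-3/2}$ dispersion to precisely $t^{-1-\gamma/2}$, which nevertheless remains integrable.

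Given this pointwise decay, the $H^{10}$ piece of the bootstrap is immediate: by (\ref{assumption}), $\|Q(u,\bar u)\|_{H^{10}}\lesssim\|u\|_{L^{\infty}}\|u\|_{H^{10}}$, with no derivative loss at high frequency, so the standard energy identity together with Gronwall yield
\[
\|u(t)\|_{H^{10}}\leq\|u_{0}\|_{H^{10}}\exp\Bigl(C\int_{0}^{t}(1+s)^{-1-\gamma/2}\,ds\Bigr)\leq\|u_{0}\|_{H^{10}}\,e^{C\epsilon_{0}}.
\]

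The heart of the argument is the propagation of the $Z$-norm. Starting from the profile equation in Fourier,
\[
\widehat{f}(t,\xi)=\widehat{u_{0}}(\xi)+\int_{0}^{t}\!\int_{\R^{3}} e^{2is\,\xi\cdot\eta}\,q(\xi-\eta,\eta)\,\widehat{f}(s,\xi-\eta)\overline{\widehat{f}(s,-\eta)}\,d\eta\,ds,
\]
with phase $\Phi=2\xi\cdot\eta$, the plan is to localize dyadically in all three frequencies $(k,k_{1},k_{2})$ and to split into: (i) Low-Low input ($k_{1},k_{2}\leq 0$), trivial by the $L^{1}$ kernel bound and the $2^{\epsilon k_{-}}$ symbol gain; (ii) Low-High or High-Low input, handled by placing the high factor in $L^{2}$ and the low factor in $L^{\infty}$ via the dispersive estimate; and (iii) the dangerous High-High to Low case ($k_{1}\simeq k_{2}\gg k$), where $\Phi$ is small and the $2^{\epsilon k_{-}}$ smoothing from (\ref{assumption}) is essential. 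For the pointwise bound on $\widehat{f}\,\psi_{k}$, case (iii) is controlled by $\int_{0}^{t}\|u\|_{L^{\infty}}\|u\|_{L^{2}}\,ds$, whose $t^{-1-\gamma/2}$ integrand beats the time integral. For the weighted $L^{2}$ estimates of $\n_{\xi}^{j}\widehat{f}\,\psi_{k}$, the derivatives distribute among the symbol (harmless), the profile factor $\widehat{f}(\xi-\eta)$ (inductive in the $Z$-norm), and the oscillating phase, the last producing a dangerous factor $(s\eta)^{j}$. This is dealt with by a normal form (integration by parts in $s$ using $\Phi^{-1}\p_{s}e^{is\Phi}=ie^{is\Phi}$) away from the resonant set $\xi\cdot\eta=0$, and by a direct size estimate that uses the $2^{\epsilon k_{-}}$ symbol gain combined with the dispersive decay in a small neighborhood of that set.

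The main obstacle is the second $\xi$-derivative in case (iii): hitting the phase twice produces $s^{2}\eta^{2}$, and neither integration by parts in $s$ nor integration by parts in $\eta$ is fully effective near $\xi\cdot\eta=0$. The $2^{\epsilon k_{-}}$ gain has to be distributed carefully between the three derivative terms, and it is exactly the sharp constraint $\gamma<\epsilon$ that leaves enough room for the bootstrap to close. Once the bootstrap holds on $[0,\infty)$, the same Duhamel estimate with one fewer time factor shows $\p_{t}\widehat{f}$ is time-integrable in an auxiliary norm weaker than $Z$, so $f(t)$ converges to some $f_{\infty}$ and the stated scattering property follows.
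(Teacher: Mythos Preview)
Your overall architecture (bootstrap, dispersive decay from the $Z$-norm, Gronwall for $H^{10}$, dyadic decomposition for the profile) matches the paper, but the mechanism you propose for the weighted pieces has a genuine gap.

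You treat the factors $(s\eta)^{j}$ coming from $\nabla_{\xi}^{j}$ on the phase by integration by parts in $s$, invoking $\Phi^{-1}\partial_{s}e^{is\Phi}$ with $\Phi=2\xi\cdot\eta$, and you then say that near the resonant set $\{\xi\cdot\eta=0\}$ ``neither integration by parts in $s$ nor integration by parts in $\eta$ is fully effective.'' The second half of that sentence is wrong, and it is precisely the point. Since $\nabla_{\eta}\bigl(2s\,\xi\cdot\eta\bigr)=2s\,\xi$, one has the exact identity
\[
e^{2is\xi\cdot\eta}=\frac{1}{2is}\,\frac{\xi}{|\xi|^{2}}\cdot\nabla_{\eta}\,e^{2is\xi\cdot\eta},
\]
so each integration by parts in $\eta$ trades a factor $s$ for $|\xi|^{-1}$ and a derivative on the inputs, with \emph{no} smallness of $\xi\cdot\eta$ entering anywhere. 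After localizing $|\xi|\sim 2^{k}$ this is uniformly effective, and applying it once (for $I^{m,2}$, $J^{m,1,2}$) or twice (for $J^{m,1,3}$) converts the dangerous $s^{j}\eta^{j}$ into expressions controlled by the $Z$-norm of the inputs. This is exactly how the paper handles the High$\times$High$\to$Low interaction (the $\chi_{k}^{1}$ terms), and it is why the argument closes without any delicate analysis of a resonant neighborhood. Your proposed normal form in $s$ would instead produce $(\xi\cdot\eta)^{-1}$, which genuinely blows up on a codimension-one set and would require an additional angular decomposition you have not supplied.

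A second, related omission: the bilinear term generates \emph{two} phases, $2\xi\cdot\eta$ and $2\xi\cdot(\xi-\eta)$, depending on which input carries the conjugate. You wrote only the first. For the second phase in the Low$\times$High$\to$High regime ($\chi_{k}^{2}$: $|\eta|\ll|\xi|\sim|\xi-\eta|$) one has $|\xi\cdot(\xi-\eta)|\sim|\xi|^{2}$, and here the paper \emph{does} integrate by parts in $s$; this is where normal forms are used, and there is no resonance because the phase is bounded below. So the two phases require opposite tools: $\eta$-IBP for $2\xi\cdot\eta$, time-IBP for $2\xi\cdot(\xi-\eta)$. Your proposal applies the latter to the former, which is the source of the ``obstacle'' you describe.
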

 
\section{Preliminary}
For any two numbers $A$ and $B$, we use notation $A\lesssim B$ and $A\ll B$ to denote  $A\leq C B$ and $A\leq c B$ respectively, where $C$ is an absolute constant and $c$ is a sufficiently small constant.  For an integer $k\in\mathbb{Z}$, we use $k_ {+} $ to denote $\max\{k,0\} $ and we use $k_ {-} $ to denote $\min\{k,0\}$. For any $k\in\mathbb{Z}$, we use ``$f_k$'' to abbreviate $P_k f$, where $P_k$ is the Littlewood-Paley operator.

 Throughout the proof of theorem \ref{maintheorem}, we will use the following bilinear estimate and the $L^\infty$ decay estimate constantly. 
 \begin{lemma}\label{bilinearlemma}
For $1\leq p, q, r\leq \infty$, $f \in L^p(\R^3)$ and $g\in L^q(\R^3)$, the following bilinear estimate holds,
\be\label{bilinearestimate}
\| \mathcal{F}^{-1}[\int_{\R^3} m(\xi, \eta) \widehat{f}(\xi-\eta) \widehat{g}(\eta) d \eta  ]\|_{L^r}\lesssim \| \mathcal{F}^{-1}[m(\xi-\eta, \eta)]\|_{L^1} \| f\|_{L^p} \| g \|_{L^q},
\ee
if $ 1/r=1/p + 1/q$.
 \end{lemma}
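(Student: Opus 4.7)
The plan is to express the bilinear operator
\[ T(f,g) := \mathcal{F}^{-1}\Big[\int m(\xi,\eta)\widehat{f}(\xi-\eta)\widehat{g}(\eta)\,d\eta\Big] \]
in physical space as an integral of (shifted) pointwise products $f \cdot g$ against a translation-invariant kernel $K$, and then close by Minkowski's integral inequality and pointwise H\"older.

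First, I would set $M(\xi,\eta) := m(\xi-\eta,\eta)$, so that $m(\xi,\eta) = M(\xi+\eta,\eta)$, and denote $K := \mathcal{F}^{-1}[M]$; this is precisely the kernel whose $L^1$ norm appears on the right-hand side of (\ref{bilinearestimate}). Substituting the Fourier inversion $M(\xi,\eta) = \iint K(u,v)\, e^{i(\xi u + \eta v)}\, du\, dv$ into the definition of $T(f,g)$ and changing variable $\alpha = \xi - \eta$ should cause the $\alpha$- and $\eta$-integrations to separate into two independent Fourier inversions --- one producing $f$ evaluated at a shift of $x$, the other producing $g$ at another shift. The anticipated identity has the form
\[ T(f,g)(x) = c\iint K(u,v)\, f(x+u)\, g(x+2u+v)\, du\, dv, \]
with a constant $c$ depending only on the Fourier convention. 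The crucial bookkeeping point --- and the reason for the specific shift $\xi \mapsto \xi-\eta$ in the hypothesis --- is that this substitution is exactly what decouples the exponentials $e^{i\alpha(x+u)}$ and $e^{i\eta(x+2u+v)}$, so that the kernel $K$ factors cleanly out.

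Once the physical-space representation is in hand, the bound is a routine two-line argument: take absolute values inside, apply Minkowski's integral inequality to bring the $L^r_x$ norm inside the $(u,v)$-integral against $|K(u,v)|$, and apply pointwise H\"older with $1/r = 1/p + 1/q$ at each fixed $(u,v)$. Since translations preserve every $L^p$ norm, what remains is
\[ \|T(f,g)\|_{L^r} \le \|f\|_{L^p}\|g\|_{L^q}\iint |K(u,v)|\,du\,dv = \|K\|_{L^1}\|f\|_{L^p}\|g\|_{L^q}, \]
which is (\ref{bilinearestimate}). No obstacle of substance is expected; the estimate is the bilinear analogue of the elementary multiplier principle that the operator norm of a convolution-type bilinear operator is controlled by the $L^1$ norm of its kernel. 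The only detail requiring attention is that the affine map $(u,v)\mapsto(u,2u+v)$ arising from the shift is invertible with unit Jacobian, so $\|K\|_{L^1}$ is indeed the correct (and only) quantity that appears on the right.
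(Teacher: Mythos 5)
Your argument is correct: the substitution $\alpha=\xi-\eta$ together with Fourier inversion of $M(\xi,\eta)=m(\xi-\eta,\eta)$ does yield the representation $T(f,g)(x)=c\iint K(u,v)\,f(x+u)\,g(x+2u+v)\,du\,dv$, after which Minkowski's integral inequality and H\"older with $1/r=1/p+1/q$ (plus translation invariance of the $L^p$ norms) give exactly (\ref{bilinearestimate}); the closing remark about the Jacobian is not even needed, since $\|K\|_{L^1}$ appears directly without any change of variables in $(u,v)$. The paper states this lemma without proof, as it is the standard elementary bilinear multiplier estimate, and your write-up is precisely the standard proof that would be supplied.
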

\begin{lemma}\label{decaylemma}
 The following $L^\infty$ decay estimate holds for a function in the $Z$-normed space,
 \be\label{decayestimate}
 \| e^{ it \Delta} P_k f\|_{L^\infty}\lesssim t^{-3/2} \| P_k f \|_{L^1}.
 \ee
\end{lemma}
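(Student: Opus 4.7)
The plan is to reduce (\ref{decayestimate}) to the classical pointwise dispersive estimate for the free Schr\"odinger group in $\R^3$. The key observation is that $e^{it\Delta}$ acts as convolution with an explicit kernel whose modulus is $(4\pi t)^{-3/2}$, so once that kernel is in hand Young's inequality finishes the job. The Littlewood--Paley projection $P_k$ is there only to guarantee that $P_k f \in L^1$ and does not enter the inequality itself.

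First I would derive the kernel formula via Fourier inversion. Starting from
\[
(e^{it\Delta} g)(x) = \frac{1}{(2\pi)^{3}} \int_{\R^3} e^{i x\cdot\xi - it|\xi|^2} \widehat{g}(\xi)\, d\xi,
\]
I would complete the square in the phase, $x\cdot\xi - t|\xi|^2 = -t\bigl|\xi - x/(2t)\bigr|^2 + |x|^2/(4t)$, and evaluate the resulting oscillatory Gaussian integral. The rigorous justification proceeds by inserting the regularizing factor $e^{-\varepsilon|\xi|^2}$ and passing to the limit $\varepsilon \to 0^{+}$, which yields the standard representation
\[
(e^{it\Delta} g)(x) = \frac{1}{(4\pi i t)^{3/2}} \int_{\R^3} e^{i|x-y|^2/(4t)}\, g(y)\, dy \qquad (t>0).
\]

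Second, I would apply this identity with $g = P_k f$. Since $|e^{i|x-y|^2/(4t)}|=1$, taking the supremum in $x$ and pulling it inside the integral yields
\[
\| e^{it\Delta} P_k f\|_{L^\infty} \;\leq\; \frac{1}{(4\pi t)^{3/2}} \|P_k f\|_{L^1},
\]
which is exactly (\ref{decayestimate}).

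The main obstacle is genuinely mild: this is a textbook fact, and the only subtlety is the distributional sense of the Gaussian integral used in the kernel derivation, for which the $\varepsilon$-regularization above is standard. In particular, the frequency cutoff $\psi_k$ plays no role in the inequality as stated, and the same bound holds for any $L^1$ function. Its relevance arises only in the nonlinear analysis to follow, where $\|P_k f\|_{L^1}$ will be controlled by the $Z$-norm of $f$ via weighted Plancherel and the derivative bounds on $\widehat{f}$ built into (\ref{definitionZ}).
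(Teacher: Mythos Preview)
Your argument is correct: the explicit kernel computation followed by Young's inequality is the standard proof of the $L^1\to L^\infty$ dispersive estimate for the free Schr\"odinger group, and your remark that the projection $P_k$ plays no role in the inequality itself is accurate. The paper does not actually give a proof here; it simply cites \cite{Guoz}[Theorem~1], which treats decay estimates for a broader class of dispersive propagators via stationary phase. Your direct derivation is therefore more elementary and self-contained than the paper's citation, while the cited reference has the advantage of handling more general dispersion relations in a unified way.
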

 \begin{proof}
 See \cite{Guoz}[Theorem 1].
\end{proof}

\section{Proof of the main theorem}
\subsection{The set-up}  The bootstrap assumption is stated as follows, 
\be\label{smallness}
\sup_{t\in[0,T]}   \| u(t)\|_{H^{10}} + \| e^{-it\Delta} u(t)\|_{Z} \lesssim \epsilon_1:=\epsilon_0^{5/6}. 
\ee

Define the profile `` $f(t)$ ''of the solution $u(t)$ as $f(t):= e^{-it \Delta} u(t)$. From (\ref{decayestimate}) in Lemma \ref{decaylemma} and the definition of $Z$-norm in (\ref{definitionZ}), the following estimate holds  for any $t\in [2^{m-1}, 2^m]$,
\be\label{eqn201}
\| e^{ it \Delta} f_{k}(t)\|_{L^\infty} \lesssim 2^{-3m/2 -k -2k_{ +}+\gamma k }\epsilon_1,\quad \sum_{k\in \mathbb{Z}}\| e^{ it \Delta} f_{k}(t)\|_{L^\infty} \lesssim 2^{-m-\gamma m/2}\epsilon_1,
\ee
\be\label{eqn200}
 \| f_k(t)\|_{L^2}   \lesssim 2^{k/2+\gamma k-10k_+} \epsilon_1, \quad 2^{k/2}\| \nabla_\xi\widehat{f}_k(t,\xi)\|_{L^2} +  2^{3k/2}\| \nabla_\xi^2 \widehat{f}_k(t,\xi)\|_{L^2}\lesssim 2^{ \gamma k -2k{+}}\epsilon_1.
\ee 
Define
\[
\chi_k^1:=\{(k_1,k_2): |k_1-k_2|\leq 10, k  \leq k_1+10\},\quad \chi_k^2:=  \{(k_1,k_2): |k-k_1|\leq 10, k_2\leq k_1-10\}.
\]
  From (\ref{mainequation}), the following equality holds for any $t_1, t_2\in[2^{m-1}, 2^m]$,
\[
\widehat{f}(t_2, \xi)\psi_{k}(\xi)- \widehat{f}(t_1, \xi)\psi_{k}(\xi) =\sum_{(k_1,k_2)\in \chi_k^1\cup \chi_k^2}  H_{k,k_1,k_2}^m(\xi),\quad  H_{k,k_1,k_2}^m(\xi):=\int_{t_1}^{t_2} \int_{\R^3} \psi_k(\xi) \big[ q(\xi-\eta, \eta) 
\]
\be\label{duhamel1}
\times e^{i s (|\xi|^2 - |\xi-\eta|^2 +|\eta|^2)} \widehat{f_{k_1}}(t, \xi-\eta)\widehat{\bar{f}_{k_2}}(t, \eta)  +q(\eta,\xi-\eta ) e^{i s (|\xi|^2 +|\xi-\eta|^2 -|\eta|^2)} \widehat{\bar{f}_{k_1}}(t, \xi-\eta)\widehat{ {f}_{k_2}}(t, \eta) \big] d\eta d s  . 
\ee

After applying $\nabla_\xi$ once and twice to (\ref{duhamel1}), we have the following eqaulity for any fixed $l,n\in \{1,2\},$
\be\label{eqn20}
\p_{\xi_l} \widehat{f}(t_2, \xi)\psi_{k}(\xi)- \p_{\xi_l} \widehat{f}(t_1, \xi)\psi_{k}(\xi) = \sum_{(k_1,k_2)\in \chi_k^1\cup \chi_k^2} \sum_{i=1,2  }I_{k,k_1,k_2}^{  m,i} + \sum_{(k_1,k_2)\in \chi_k^2} I_{k,k_1,k_2}^{ m,3},
\ee
\be\label{eqn1}
  \p_{\xi_l}\p_{\xi_n}  \widehat{f}(t_2, \xi)\psi_{k}(\xi) -  \p_{\xi_l}\p_{\xi_n} \widehat{f}(t_1, \xi)\psi_{k}(\xi) = \sum_{(k_1,k_2)\in \chi_k^1\cup \chi_k^2}  \sum_{i=1,2,3}J_{k,k_1,k_2}^{ m,1,i} +  \sum_{(k_1,k_2)\in \chi_k^2} \sum_{i=1,2,3} J_{k,k_1,k_2}^{ m,2,i},
\ee
where  
\be\label{eqn2}
I_{k,k_1,k_2}^{ m,1}(\xi) :=\int_{t_1}^{t_2} \int_{\R^3} e^{i 2s \xi \cdot\eta}  \p_{\xi_l}\big(q(\xi-\eta, \eta)\widehat{f_{k_1}}(t, \xi-\eta)\big)\widehat{\bar{f}_{k_2}}(t, \eta)  \psi_k(\xi) d\eta d s,
\ee
\be\label{eqn3}
I_{k,k_1,k_2}^{ m,2}(\xi)  :=\int_{t_1}^{t_2} \int_{\R^3} e^{i 2s \xi \cdot\eta} 2 i  s \eta_l  q(\xi-\eta, \eta) \widehat{f_{k_1}}(t, \xi-\eta) \widehat{\bar{f}_{k_2}}(t, \eta)  \psi_k(\xi) d\eta d s,
\ee
\[
I_{k,k_1,k_2}^{ m,3}(\xi):=\int_{t_1}^{t_2} \int_{\R^3} e^{i 2s \xi \cdot(\xi-\eta)}  \p_{\xi_l}\big( q(\eta,\xi-\eta )   \widehat{\bar{f}_{k_1}}(t, \xi-\eta)\big)\widehat{  {f}_{k_2}}(t, \eta)  \psi_k(\xi) d\eta d s
\]
\be\label{eqn240}
+ \int_{t_1}^{t_2} \int_{\R^3} e^{i 2s \xi \cdot(\xi-\eta) }i2 s (2\xi_l-\eta_l)  q(\eta,\xi-\eta )   \widehat{\bar{f}_{k_1}}(t, \xi-\eta) \widehat{  {f}_{k_2}}(t, \eta)  \psi_k(\xi) d\eta d s,
\ee
\be\label{eqn4}
J_{k,k_1,k_2}^{ m,1,1}(\xi) :=\int_{t_1}^{t_2} \int_{\R^3} e^{i 2s \xi \cdot\eta}   \p_{\xi_l}\p_{\xi_n}  \big(  q(\xi-\eta, \eta) \widehat{f_{k_1}}(t, \xi-\eta) \big)\widehat{\bar{f}_{k_2}}(t, \eta) \psi_k(\xi) d\eta d s,
\ee
\be\label{eqn5}
J_{k,k_1,k_2}^{ m,1,2}(\xi) :=\sum_{(l_1,n_1) =(l,n),(n,l) }\int_{t_1}^{t_2} \int_{\R^3} e^{i 2s \xi \cdot\eta} 2i  s \eta_{n_1} \p_{\xi_{l_1}}  \big( q(\xi-\eta, \eta) \widehat{f_{k_1}}(t, \xi-\eta) \big) \widehat{\bar{f}_{k_2}}(t, \eta) \psi_k(\xi) d\eta d s,
\ee
\be\label{eqn6}
J_{k,k_1,k_2}^{ m,1,3}(\xi):=-\int_{t_1}^{t_2} \int_{\R^3} e^{i 2s \xi \cdot\eta}  4 s^2 \eta_l \eta_n  q(\xi-\eta, \eta) \widehat{f_{k_1}}(t, \xi-\eta) \widehat{\bar{f}_{k_2}}(t, \eta)  \psi_k(\xi) d\eta d s,
\ee
\be\label{eqn271}
J_{k,k_1,k_2}^{ m,2,1}(\xi):=\int_{t_1}^{t_2} \int_{\R^3} e^{i 2s \xi \cdot(\xi-\eta)}   \p_{\xi_l}\p_{\xi_n}  \big(  q(\eta,\xi-\eta) \widehat{\bar{f}_{k_1}}(t, \xi-\eta) \big)   \widehat{{f}_{k_2}}(t, \eta) \psi_k(\xi) d\eta d s,
\ee
\[
J_{k,k_1,k_2}^{ m,2,2}(\xi):=\sum_{(l_1,n_1) =(l,n),(n,l) }\int_{t_1}^{t_2} \int_{\R^3} e^{i 2s \xi \cdot(\xi-\eta)} 2i  s (2\xi_{n_1}-\eta_{n_1}) \]
\be\label{eqn272}
\times \p_{\xi_{l_1}}  \big( q(\eta,\xi-\eta) \widehat{\bar{f}_{k_1}}(t, \xi-\eta) \big)\widehat{{f}_{k_2}}(t, \eta) \psi_k(\xi) d\eta d s,
\ee
\be\label{eqn274}
J_{k,k_1,k_2}^{ m,2,3}(\xi):=- \int_{t_1}^{t_2} \int_{\R^3} e^{i 2s \xi \cdot(\xi-\eta)} 4  s (2\xi_{l }-\eta_{l })(2\xi_{n} -\eta_n)  q(\eta,\xi-\eta) \widehat{\bar{f}_{k_1}}(t, \xi-\eta)   \widehat{{f}_{k_2}}(t, \eta) \psi_k(\xi) d\eta d s.
\ee
\subsection{The growth of $Z$-norm and energy over time}

\begin{lemma}\label{L2derivatieveestimate}
Under the bootstrap assumption \textup{(\ref{smallness})}, the following estimates hold, 
\be\label{eqn298}
  \|H_{k,k_1,k_2}^m  \|_{L^2} \lesssim\min\{2^{m +3 \min\{k,k_2\}/2  + k_1/2  } ,2^{-m/2-k_1    }\} 2^{\epsilon k_{-}+ k_2/2+\gamma(k_1+k_2)-2(k_{1,+}+k_{2,+})}\epsilon_1^2,
\ee
\be\label{eqn230}
\|    \p_t f_k  \|_{L^2} \lesssim   \min\{2^{-m + k/2-3k_{+}+\epsilon k_{-}} ,2^{-(1+\gamma/2)m -10k_{+} +\epsilon k_{-}}\}\epsilon_1^2,
\ee
\be\label{e300}
 \| e^{ it \Delta}    \p_t  f_k  \|_{L^\infty}\lesssim 2^{-5m/2-k+\gamma k-2k_{+}} \epsilon_1^2.
\ee
 
\end{lemma}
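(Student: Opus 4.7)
All three estimates reduce, via Lemma~\ref{bilinearlemma}, to pointwise or $L^2$ control of the factors $u_{k_1},u_{k_2}$ supplied by the bootstrap estimates \eqref{eqn201}--\eqref{eqn200}, with the symbol hypothesis \eqref{assumption} entering only through the prefactor $2^{\epsilon k_-}$. I shall use the identities $\partial_t f_k=e^{-it\Delta}P_kQ(u,\bar u)$ and $e^{it\Delta}\partial_t f_k=P_kQ(u,\bar u)$, which follow from the PDE and the definition of the profile $f=e^{-it\Delta}u$; these let me pass between norms of $\partial_t f_k$ and norms of the nonlinearity evaluated at the current time.

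For \eqref{eqn298}, apply Plancherel in $\xi$ and bound $\|H_{k,k_1,k_2}^m\|_{L^2}\leq\int_{t_1}^{t_2}\|P_k Q(u_{k_1},\bar u_{k_2})\|_{L^2}\,ds$ (plus the symmetric conjugate term). The two terms in the minimum correspond to two Hölder pairings inside Lemma~\ref{bilinearlemma}: (a) the $L^2\cdot L^2$ pairing together with Bernstein $\|P_k g\|_{L^2}\lesssim 2^{3k/2}\|g\|_{L^1}$ when $k\leq k_2$, or Bernstein on $u_{k_2}$ when $k_2<k$, yields the factor $2^{3\min\{k,k_2\}/2}$, and feeding in \eqref{eqn200} gives the first bound $2^{m+3\min\{k,k_2\}/2+k_1/2+\cdots}$; (b) the $L^\infty\cdot L^2$ pairing, using \eqref{eqn201} on the $u_{k_1}$ factor and \eqref{eqn200} on the $u_{k_2}$ factor, followed by the trivial $s$-integration, gives the second bound $2^{-m/2-k_1+\cdots}$.

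For \eqref{eqn230}, $\|\partial_t f_k\|_{L^2}=\|P_kQ(u,\bar u)\|_{L^2}$, so I reapply the same two Hölder pairings pointwise in $s$ (dropping the $2^m$ factor) and sum over $(k_1,k_2)\in\chi_k^1\cup\chi_k^2$. The second term $2^{-(1+\gamma/2)m-10k_++\epsilon k_-}$ comes from the $L^\infty\cdot L^2$ pairing combined with the summed form $\sum_{k'}\|e^{it\Delta}f_{k'}\|_{L^\infty}\lesssim 2^{-(1+\gamma/2)m}\epsilon_1$ of \eqref{eqn201}. The first term $2^{-m+k/2-3k_++\epsilon k_-}$ is obtained from the $L^2\cdot L^2$-with-Bernstein pairing, which becomes the winning bound precisely when $k$ is so negative that $2^{3k/2}$ compensates for the absence of a time-decay factor. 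Convergence of the sums in $k_1,k_2$ is ensured by the $-10k_{i,+}$ tail in \eqref{eqn200} at high frequencies and by $k_i/2+\gamma k_i$ together with the frequency constraints defining $\chi_k^i$ at low frequencies.

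For \eqref{e300}, I apply Lemma~\ref{bilinearlemma} in the $L^\infty\cdot L^\infty$ pairing. In case $\chi_k^2$, where $k_1\sim k$, the pointwise bound \eqref{eqn201} on $u_{k_1}$ yields $2^{-3m/2-k+\gamma k-2k_+}\epsilon_1$, while the remaining sum $\sum_{k_2\leq k-10}\|u_{k_2}\|_{L^\infty}$ is bounded by $2^{-(1+\gamma/2)m}\epsilon_1$; their product gives exactly the claimed $2^{-5m/2-k+\gamma k-2k_+}\epsilon_1^2$. Case $\chi_k^1$ is handled analogously (pair the supremum of $\|u_{k_1}\|_{L^\infty}$ with the summed form of \eqref{eqn201}) and is at least as favorable. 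The main obstacle is the case analysis for \eqref{eqn230}: each of the three pairings ($L^2\cdot L^2$ with Bernstein, $L^\infty\cdot L^2$, $L^\infty\cdot L^\infty$) is sharp only in a restricted region of $(k,k_1,k_2)$, so the correct pairing must be chosen in each sub-region so that after summation in $k_1,k_2$ the claimed minimum is recovered uniformly in $k$.
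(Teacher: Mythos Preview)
Your treatment of \eqref{eqn298} and \eqref{e300} is essentially the paper's argument: for $H_{k,k_1,k_2}^m$ the two H\"older pairings you describe are exactly the ones used, and for the $L^\infty$ bound the $L^\infty\times L^\infty$ product combined with the summed dispersive estimate is what the paper does (with a split at $k_2=-m/2$).

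There is, however, a genuine gap in your argument for \eqref{eqn230}. You assert that the first term of the minimum, $2^{-m+k/2-3k_++\epsilon k_-}$, is obtained from the $L^2\times L^2$-with-Bernstein pairing. But that pairing carries \emph{no} time decay: after summation it yields only $2^{3k/2+\epsilon k_-}$ (up to harmless factors). The inequality $2^{3k/2}\le 2^{-m+k/2}$ holds only for $k\le -m$, whereas the first term of the minimum is the operative one throughout the range $k\le -\gamma m$. For instance at $k\approx -m/2$ your two pairings give at best $\min\{2^{-3m/4},\,2^{-(1+\gamma/2)m}\}=2^{-(1+\gamma/2)m}$, while the stated bound requires $2^{-5m/4}$. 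Adding the $L^\infty\times L^2$ pairing with $L^\infty$ on the high-frequency input does not close this either (it gives $2^{-3m/2-k/2}$, still too large near $k\approx -3m/4$).

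The missing ingredient, which the paper supplies, is an intermediate H\"older exponent: one uses the $L^{3/2}\to L^2$ Bernstein inequality at the output (cost $2^{k/2}$) together with the $L^6\times L^2\to L^{3/2}$ product, and estimates $\|e^{it\Delta}f_{k_1}\|_{L^6}$ by interpolating between $L^2$ and $L^\infty$, picking up exactly $2^{-m-k_1/2}$. This is what produces the precise decay rate $2^{-m+k/2}$ after summation; it cannot be synthesized from the three pairings you list. Since the first bound in \eqref{eqn230} is invoked later (e.g.\ in the proof of \eqref{eqn760}) precisely at frequencies where your weaker substitute fails, this step needs to be repaired.
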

\begin{proof}
Recall   (\ref{duhamel1}). 
 From the $L^2-L^\infty$ type bilinear estimate (\ref{bilinearestimate}) in Lemma \ref{bilinearlemma}, (\ref{assumption}), and the $L^\infty\rightarrow L^2$ type   Sobolev embedding  or  alternatively   the $L^2 \rightarrow L^1$ type Sobolev embedding and $L^2-L^2$ type bilinear estimate,  the following estimates hold, 
\[
  \|H_{k,k_1,k_2}  \|_{L^2} \lesssim \sup_{t\in[2^{m-1}, 2^m]} 2^{m +3 \min\{k,k_2\}/2 +\epsilon k_{-} } \| f_{k_1}(t)\|_{L^2}  \| f_{k_2}(t)\|_{L^2}\]
\be\label{eqn301} 
\lesssim 2^{\epsilon k_{-}+\gamma(k_1+k_2)} 2^{m +3 \min\{k,k_2\}/2    +k_1/2 +k_2/2-10 k_{1,+} -10 k_{2,+}}\epsilon_1^2.
\ee
\[
  \|H_{k,k_1,k_2}  \|_{L^2} \lesssim  \sup_{t\in[2^{m-1}, 2^m]} 2^{m+\epsilon k_{-} } \| e^{ it\Delta} f_{k_1}\|_{L^\infty} \| f_{k_2}(t)\|_{L^2} 
\]
\be\label{eqn300}
\lesssim 2^{ \epsilon k_{-}+\gamma(k_1+k_2)} 2^{-m/2-k_1    +k_2/2-2k_{1,+}-2k_{2,+}} \epsilon_1^2.
\ee
Combine estimates (\ref{eqn301}) and (\ref{eqn300}), it is easy to see our desired estimate (\ref{eqn298})
 holds. 

  Very similarly, from the $L^2\rightarrow L^{3/2}$ type Sobolev embedding, the $L^2-L^{6 }$ type bilinear estimate, and the $L^2-L^{\infty }$ type bilinear estimate,  the following estimates hold,
\[
\|    \p_t f_k  \|_{L^2}\lesssim \sum_{ k_2\leq k_1 } \min\big\{2^{ k/2+\epsilon k_{-}} \| e^{ it \Delta} f_{k_1}\|_{L^{6 }} \| f_{k_2}\|_{L^2}, 2^{\epsilon k_{-}}\| e^{ it \Delta} f_{k_2}\|_{L^\infty} \| f_{k_1}\|_{L^2} \big\}\]
\[ \lesssim  \sum_{k_2\leq k_1 } \min\big\{2^{k/2 +\epsilon k_{-} - m- k_1/2-3k_{1,+}   + k_2/2+\gamma k_2-2k_{2,+}}\epsilon_1^2,\quad 2^{\epsilon k_{-}-10 k_{1,+}-2k_{2,+}} \]
\[\times\min\{2^{-3m/2-k_2+\gamma k_2}, 2^{2k_2 +\gamma k_2} \}\epsilon_1^2\big\}
 \lesssim \min\{2^{-m + k/2+\epsilon k_{-}-3k_{+}} , 2^{-(1+\gamma/2)m -10k_{+} +\epsilon k_{-}}\}\epsilon_1^2. \]
\[ \| e^{ it \Delta}    \p_t  f_k  \|_{L^\infty}\lesssim \sum_{k_2\leq k_1}  2^{\epsilon k_{-}}\| e^{ it \Delta} f_{k_1}\|_{L^\infty}\| e^{ it \Delta} f_{k_2}\|_{L^\infty}  \lesssim  \sum_{k_1\geq k-10, k_2\leq -m/2}  2^{-3m/2-k_1+\gamma k_{1}-2k_{1,+}}\]
\[\times 2^{ 2k_2+\gamma k_2+\epsilon k_{ -} } \epsilon_1^2+    \sum_{k_1\geq k-10, k_2\geq-m/2}  2^{-3m-k_1+\gamma k_1-k_2-2k_{1,+}-2k_{2,+}+ \epsilon k_{-} }  \epsilon_1^2\lesssim 2^{-5m/2-k+\gamma k-2k_{+}} \epsilon_1^2.
\]
Now it is easy to see our desired estimates (\ref{eqn230}) and (\ref{e300}) hold. 
\end{proof}
\begin{lemma}\label{L2weightone}
Under the bootstrap assumption \textup{(\ref{smallness})}, the following estimates hold,
\[
 \sum_{i=1,2}    2^{k/2}  \| 	I_{k,k_1,k_2}^{m,i}(\xi) \|_{L^2}  +  \sum_{i=1,2,3}  2^{3k/2}  \| J_{k,k_1,k_2}^{ m,1,i}(\xi) \|_{L^2} 
\]
\be\label{eqn410}
 \lesssim 2^{ -k/2 +k_1/2+ \epsilon  k_{-}+\gamma(k_1+k_2)-2(k_{1,+}+k_{2,+})} \min\{2^{m+3\min\{k,k_2\}/2 +k_2/2}, 2^{-m/2-k_2}\}\epsilon_1^2. 
 \ee
\end{lemma}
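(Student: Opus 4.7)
The plan is to estimate each of the five families $I^{m,1}, I^{m,2}, J^{m,1,i}$ (for $i=1,2,3$) in $L^2$ via Lemma~\ref{bilinearlemma}, after first performing integration by parts in $\eta$ to cancel any extra factors of $s$ produced by differentiating the phase $e^{2is\xi\cdot\eta}$.

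First I would handle $I^{m,1}$ and $J^{m,1,1}$, which carry no extra $s$-factors. I would Leibniz-expand $\partial_{\xi_l}$ (respectively $\partial_{\xi_l}\partial_{\xi_n}$) into sub-terms where the derivatives fall either on $q$ or on $\widehat{f_{k_1}}(\xi-\eta)$: derivatives on $q$ contribute multipliers whose $L^1$-Fourier norm is $\lesssim 2^{\epsilon k_- - jk}$ for $j\leq 2$ by \eqref{assumption}, while derivatives on $\widehat{f_{k_1}}$ correspond to the profiles $xf_{k_1}$ or $x^2 f_{k_1}$, whose $L^2$ norms are controlled via \eqref{eqn200}. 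Each sub-term then has the form $\int_{t_1}^{t_2}\!\int_{\R^3} e^{2is\xi\cdot\eta} m(\xi,\eta)\widehat{F_{k_1}}(\xi-\eta)\widehat{G_{k_2}}(\eta)\,d\eta\,ds$, and I apply \eqref{bilinearestimate} with two H\"older distributions: an $L^2\times L^\infty$ pairing using the decay estimate \eqref{eqn201} on the $k_2$-factor produces the $2^{-m/2-k_2}$ bound of \eqref{eqn410}, while an $L^2\times L^2$ pairing combined with Bernstein applied to whichever of the output or the $k_2$-factor has the lower frequency produces the $2^{m+3\min\{k,k_2\}/2+k_2/2}$ bound.

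Next, for $I^{m,2}$ and $J^{m,1,2}$ (one factor of $s\eta_l$) and $J^{m,1,3}$ (two such factors), I would integrate by parts in $\eta$ using
\begin{equation*}
s\eta_l \, e^{2is\xi\cdot\eta} \;=\; \frac{\eta_l}{2i|\xi|^2}\,\xi\cdot\nabla_\eta e^{2is\xi\cdot\eta},
\end{equation*}
which trades each factor of $s\eta_l$ for one $\eta$-derivative with coefficient of size $\lesssim 2^{k_2-2k}$. The resulting $\eta$-derivative is then distributed via Leibniz onto the $\eta_l/|\xi|^2$ coefficient, onto $q$ (producing $2^{-\min\{k_1,k_2\}+\epsilon k_-}$ by the third or fifth term of \eqref{assumption}), onto $\widehat{f_{k_1}}(\xi-\eta)$, or onto $\widehat{\bar{f_{k_2}}}(\eta)$. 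For $J^{m,1,3}$ the procedure is repeated, yielding coefficients of size $\lesssim 2^{2k_2-4k}$ and up to two $\eta$-derivatives. Every post-IBP sub-term is then structurally identical to those handled in the previous step, and the same two bilinear estimates deliver the bounds of \eqref{eqn410}.

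The main obstacle is the case bookkeeping: after IBP each family branches into many sub-terms via Leibniz, and for each one must select the better H\"older pairing and verify the claimed bound across every relevant range of $(k,k_1,k_2)$ within $\chi_k^1 \cup \chi_k^2$. The most delicate situation is when $k$ is very negative: the coefficients $2^{k_2-2k}$ or $2^{2k_2-4k}$ arising from IBP become large, and one must rely on the first bound inside the minimum in \eqref{eqn410}, verifying that this growth is absorbed by the $2^{3\min\{k,k_2\}/2}$ gain extracted by the Bernstein step. A systematic case analysis over the signs of $k, k_1, k_2$ and over membership in $\chi_k^1$ versus $\chi_k^2$ closes the proof.
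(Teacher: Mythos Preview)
Your strategy---integration by parts in $\eta$ to remove the factors of $s$, followed by $L^2$--$L^\infty$ or $L^2$--$L^2$ (with Bernstein) bilinear estimates---is exactly what the paper does. However, there is a gap in your treatment of the decaying branch of the minimum for $J^{m,1,2}$ and $J^{m,1,3}$. After the integration by parts, Leibniz produces cross terms in which an $\eta$-derivative lands on $\widehat{\bar f_{k_2}}$ while a $\xi$-derivative already sits on $\widehat{f_{k_1}}$, i.e.\ terms of the schematic form $\nabla_\xi\widehat{f_{k_1}}(\xi-\eta)\,\nabla_\eta\widehat{\bar f_{k_2}}(\eta)$ (and, for $J^{m,1,3}$, the symmetric $\nabla\widehat{f_{k_1}}\cdot\nabla\widehat{f_{k_2}}$ term). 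Your $L^2$--$L^2$ pairing handles the $2^{m+3\min\{k,k_2\}/2+k_2/2}$ half of the minimum for these terms, but your $L^2$--$L^\infty$ pairing would require a dispersive bound on $e^{it\Delta}\mathcal F^{-1}[\nabla_\xi\widehat{f_k}]$, and \eqref{eqn201} only gives decay for $e^{it\Delta}f_k$, not for $e^{it\Delta}(xf_k)$.

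The paper closes precisely this gap by switching to an $L^4$--$L^4$ bilinear estimate for the cross terms, interpolating
\[
\big\|e^{it\Delta}\mathcal F^{-1}[\nabla_\xi\widehat{f_k}]\big\|_{L^4}\ \lesssim\ \big\|e^{it\Delta}\mathcal F^{-1}[\nabla_\xi\widehat{f_k}]\big\|_{L^\infty}^{1/2}\,\|\nabla_\xi\widehat{f_k}\|_{L^2}^{1/2}
\]
and controlling the $L^\infty$ factor via $t^{-3/2}\|\cdot\|_{L^1}$ together with the second-order weight in the $Z$-norm; see \eqref{L4estimate1}--\eqref{L4estimate2} and their use in \eqref{eqn420} and \eqref{eqn423}. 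Without this device (or an equivalent direct $L^1$ bound on $xf_k$ fed into Lemma~\ref{decaylemma}), the $2^{-m/2-k_2}$ branch of \eqref{eqn410} is not established for those cross terms, and your ``same two bilinear estimates'' do not suffice.
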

\begin{proof}
We first estimate $ I_{k,k_1,k_2}^{m,1}(\xi)$ and $J_{k,k_1,k_2}^{m,1,1}(\xi)$.
Recall (\ref{eqn2}), and (\ref{eqn4}). 
From the $L^2-L^\infty$ type bilinear estimate and the $L^\infty\rightarrow L^2$ type Sobolev embedding,  or  alternatively   the $L^2 \rightarrow L^1$ type Sobolev embedding and $L^2-L^2$ type bilinear estimate, the following estimate holds, 
\[
 \| 	I_{k,k_1,k_2}^{m,1}(\xi) \|_{L^2}  \lesssim \sup_{t\in[2^{m-1}, 2^m]} 2^{m  + \epsilon k_{ -}}  \big(2^{-k} \| f_{k_1}(t)\|_{L^2} + \| \nabla_\xi \widehat{f_{k_1}}(t, \xi)\|_{L^2}  \big)\min\{   2^{3\min\{k,k_2\}/2}\| f_{k_2}(t)\|_{L^2}   ,
\]
\be\label{eqn320}
  \| e^{ it \Delta} f_{k_2}(t)\|_{L^\infty}\}  \lesssim 2^{-k+k_1/2 +\epsilon  k_{-}+\gamma(k_1+k_2)-2(k_{1,+}+k_{2,+}) } \min\{2^{m+3\min\{k,k_2\}/2 +k_2/2}, 2^{-m/2-k_2}\}\epsilon_1^2.
\ee
\[
 \| 	J_{k,k_1,k_2}^{ m,1,1}(\xi) \|_{L^2}  \lesssim \sup_{t\in[2^{m-1}, 2^m]} 2^{m  + \epsilon k_{ -}}   \min\{  \| e^{ it \Delta} f_{k_2}(t)\|_{L^\infty},2^{3\min\{k,k_2\}/2}\| f_{k_2}(t)\|_{L^2}  \} 
\]
\[
\times  \big(2^{-2k} \| f_{k_1}(t)\|_{L^2} +2^{-k } \| \nabla_\xi \widehat{f_{k_1}}(t, \xi)\|_{L^2}  + \| \nabla_\xi^2 \widehat{f_{k_1}}(t, \xi)\|_{L^2}   \big)
\]
\be\label{eqn390}
 \lesssim 2^{ -2k + k_1/2 +\epsilon  k_{-}+\gamma(k_1+k_2)-2(k_{1,+}+k_{2,+})} \min\{2^{m+3\min\{k,k_2\}/2 +k_2/2}, 2^{-m/2-k_2}\}\epsilon_1^2.
\ee

Now, we proceed to estimate  $ I_{k,k_1,k_2}^{m,2}(\xi)$ and $J_{k,k_1,k_2}^{m,1,i}(\xi)$, $i\in\{2,3\}$.
Recall (\ref{eqn3}),(\ref{eqn5}), and (\ref{eqn6}).
We do integration by parts in ``$\eta$'' once  for $I_ {k_1, k_2} ^ {m, 2} $ and $J_ {k_1, k_2}^{m, 1,2} $ and   do integration by parts in ``$\eta$'' twice for $J_ {k_1, k_2}^{m, 1,3} $. As a result, we have
\be\label{eqn340}
I_{k,k_1,k_2}^{m,2}(\xi) :=\int_{t_1}^{t_2} \int_{\R^3} e^{i 2s \xi \cdot\eta} \frac{-\xi}{|\xi|^2} \cdot  \nabla_\eta\big(  \eta_l  q(\xi-\eta, \eta) \widehat{f_{k_1}}(t, \xi-\eta) \widehat{\bar{f}_{k_2}}(t, \eta) \big)\psi_k(\xi) d\eta d s,
\ee
\[
J_{k,k_1,k_2}^{ m,1,2}(\xi):= \sum_{(l_1,n_1)=(l,n),(n,l) }\int_{t_1}^{t_2} \int_{\R^3} e^{i 2s \xi \cdot\eta}\psi_k(\xi)\]
\[\times  \frac{-\xi}{|\xi|^2} \cdot  \nabla_\eta\Big(   \eta_{n_1} \p_{\xi_{l_1}}  \big( q(\xi-\eta, \eta) \widehat{f_{k_1}}(t, \xi-\eta) \big) \widehat{\bar{f}_{k_2}}(t, \eta) \Big) d\eta d s,
\]
\[
J_{k,k_1,k_2}^{ m,1,3}(\xi):= \int_{t_1}^{t_2} \int_{\R^3} e^{i 2s \xi \cdot\eta}   \frac{  \xi}{|\xi|^2} \cdot \nabla_\eta \cdot \Big(   \frac{\xi}{|\xi|^2} \cdot \nabla_\eta \big( \eta_l \eta_n  q(\xi-\eta, \eta) \widehat{f_{k_1}}(t, \xi-\eta) \widehat{\bar{f}_{k_2}}(t, \eta)\big) \Big) \psi_k(\xi) d\eta d s.
\]
  From the $L^2-L^\infty$ type estimate and the $L^\infty \longrightarrow L^2$ type Sobolev embedding, we have
\[
 \|I_{k,k_1,k_2}^{m,2}(\xi) \|_{L^2} \lesssim  \sup_{t\in[2^{m-1}, 2^m]} 2^{m+\epsilon k_{-}+k_2-k} \big[ \| \nabla_\xi \widehat{f_{k_1}}(t,\xi)\|_{L^2}  \| e^{ it \Delta} f_{k_2}(t)\|_{L^\infty}       + \| e^{ it \Delta} f_{k_1}(t)\|_{L^\infty} 
\] 
\be\label{eqn401}
\times \big( \| \nabla_\xi \widehat{f_{k_2}}(t,\xi)\|_{L^2} + 2^{-k_2} \|   {f_{k_2}}(t)\|_{L^2} \big) \big] \lesssim 2^{-m/2 -k_1/2 -k +\epsilon  k_{-}+\gamma(k_1+k_2)-2(k_{1,+}+k_{2,+}) } \epsilon_1^2,
\ee
\[
 \|I_{k,k_1,k_2}^{m,2}(\xi) \|_{L^2} \lesssim  \sup_{t\in[2^{m-1}, 2^m]} 2^{m +k_2-k+\epsilon k_{-}} 2^{3\min\{k,k_2\}/2}\big[ \| \nabla_\xi \widehat{f}_{k_1}(t,\xi)\|_{L^2} \| f_{k_2}(t)\|_{L^2}
 +
  \| f_{k_1}(t)\|_{L^2}  
\]
\be\label{eqn402}
\times  \big(\| \nabla_\xi \widehat{f}_{k_2}(t,\xi)\|_{L^2} + 2^{-k_2}\| f_{k_2}(t)\|_{L^2}\big)\big]\lesssim 2^{m+k_2/2+k_1/2-k +3\min\{k,k_2\}/2 +\epsilon  k_{-}+\gamma(k_1+k_2)-2(k_{1,+}+k_{2,+})}\epsilon_1^2.
\ee

Very similarly, from the $L^2-L^\infty$ type and $L^4-L^4$ type bilinear estimates, we have
\[
   \|J_{k,k_1,k_2}^{m,1,2}(\xi)\|_{L^2}\lesssim \sup_{t\in[2^{m-1}, 2^m]}
2^{m+\epsilon k_{-}  +k_2-k}\big[  \| \nabla_\xi^2 \widehat{f}_{k_1}(t, \xi)\|_{L^2} \| e^{ it\Delta} f_{k_2}(t)\|_{L^\infty} +2^{-\min\{k,k_2\}}
\]
\[
\times \| e^{ it \Delta} \mathcal{F}^{-1}[\nabla_\xi  \widehat{f}_{k_1}(t, \xi)]\|_{L^4} \| e^{ it\Delta} f_{k_2}(t)\|_{L^4} + 2^{-k_2-k} \| e^{ it \Delta} g_{k_1}\|_{L^2} \| g_{k_2}(t)\|_{L^2}
\]
\[
  +  \| e^{ it \Delta} \mathcal{F}^{-1}[\nabla_\xi  \widehat{f}_{k_2}(t, \xi)]\|_{L^4}  \big(  \| e^{ it \Delta} \mathcal{F}^{-1}[\nabla_\xi  \widehat{f}_{k_1}(t, \xi)]\|_{L^4}  + 2^{-k}\|e^{ it \Delta} f_{k_1}(t) \|_{L^4}\big) \big]
\]
\be\label{eqn420}
\lesssim 2^{-m/2  -2k-k_2/4- k_1/4+\epsilon  k_{-}+\gamma(k_1+k_2)-2(k_{1,+}+k_{2,+})}\epsilon_1^2,
\ee

\[
  \|J_{k,k_1,k_2}^{ m,1,3}(\xi)\|_{L^2}\lesssim \sup_{t\in[2^{m-1}, 2^m]}
2^{m+\epsilon k_{-}  +2k_2-2k}\big[  (\| \nabla_\xi^2 \widehat{f}_{k_1}(t, \xi)\|_{L^2}  + 2^{-k_2} \|  \nabla_\xi  \widehat{f}_{k_1}(t, \xi) \|_{L^2})
\]
\[
\times \| e^{ it\Delta} f_{k_2}(t)\|_{L^\infty}   + \| e^{ it\Delta} f_{k_1}(t)\|_{L^\infty} 
  \big(\|   \nabla_\xi^2  \widehat{f}_{k_2}(t, \xi) \|_{L^2}+ 2^{-k_2}\|   \nabla_\xi  \widehat{f}_{k_2}(t, \xi) \|_{L^2} + 2^{-2k_2} \| f_{k_2}(t)\|_{L^2})  
\]
\[
+ \| e^{ it \Delta} \mathcal{F}^{-1}[\nabla_\xi  \widehat{f}_{k_1}(t, \xi)]\|_{L^4}  \| e^{ it \Delta} \mathcal{F}^{-1}[\nabla_\xi  \widehat{f}_{k_2}(t, \xi)]\|_{L^4} \big]
\]
\be\label{eqn423}
\lesssim  2^{-m/2-2k -k_1/2 +  \epsilon  k_{-}+\gamma(k_1+k_2)-2(k_{1,+}+k_{2,+})}\epsilon_1^2.
\ee
In above estimates, we used the fact that,
\be\label{L4estimate1}
\| e^{ it\Delta} f_{k_2}(t)\|_{L^4}  \lesssim \| e^{ it\Delta} f_{k_2}(t)\|_{L^\infty}^{1/2}  \|   f_{k_2}(t)\|_{L^2}^{1/2} \lesssim 2^{\gamma k_2 -2k_{2,+}}\min\{2^{-3m/4-k_2/4} , 2^{5k_2/4}\}\epsilon_1.
\ee
\be\label{L4estimate2}
\| e^{ it\Delta} \mathcal{F}^{-1}[\nabla_\xi \widehat{f_{k_2}}(t,\xi)]\|_{L^4}  \lesssim 2^{\gamma k_2 -2k_{2,+} }\min\{2^{-3m/4-5k_2/4}, 2^{ k_2/4} \}\epsilon_1.
\ee

On the other hand, if we use the volume of the support of $\xi$ or $\eta$   first and  then use the   bilnear estimate (\ref{bilinearestimate}) in Lemma \ref{bilinearlemma},  the following estimates hold,
\[
  \|J_{k,k_1,k_2}^{m,1,2}(\xi)\|_{L^2}\lesssim \sup_{t\in[2^{m-1}, 2^m]} 2^{m + \epsilon k_{-} + k_2- k+ 3\min\{k,k_2\}/2}\big[\big( 2^{-\min\{k,k_2\}} \| \nabla_\xi   \widehat{f}_{k_1}(t, \xi)\|_{L^2}\]
  \[+\| \nabla_\xi^2 \widehat{f}_{k_1}(t, \xi)\|_{L^2}   \big) \|f_{k_2}(t)\|_{L^2} +  2^{-k} \|  f_{k_1}(t)\|_{L^2} \big(  \| \nabla_\xi  \widehat{f}_{k_2}(t, \xi)\|_{L^2} +2^{-k_2} \| f_{k_2}(t)\|_{L^2}\big)    +  \| \nabla_\xi  \widehat{f}_{k_1}(t, \xi)\|_{L^2}
\]
\be\label{eqn437}
 \times\| \nabla_\xi  \widehat{f}_{k_2}(t, \xi)\|_{L^2} \big] \lesssim 2^{m -2k+k_1/2+ k_2/2+ 3\min\{k,k_2\}/2+\epsilon  k_{-}+\gamma(k_1+k_2)-2(k_{1,+}+k_{2,+})} \epsilon_1^2.
\ee

 \[
  \|J_{k,k_1,k_2}^{m,1,3}(\xi)\|_{L^2}\lesssim \sup_{t\in[2^{m-1}, 2^m]} 2^{m + \epsilon k_{-} +2k_2-2k+ 3\min\{k,k_2\}/2}\big[ \big(\| \nabla_\xi^2 \widehat{f}_{k_1}(t, \xi)\|_{L^2}\|+ 2^{-k_2} \| \nabla_\xi   \widehat{f}_{k_1}(t, \xi)\|_{L^2}\big)   \]
\[ \times \|f_{k_2}(t)\|_{L^2} +  \big(\| \nabla_\xi^2 \widehat{f}_{k_2 }(t, \xi)\|_{L^2} + 2^{-k_2} \| \nabla_\xi  \widehat{f}_{k_2}(t, \xi)\|_{L^2} + 2^{-2k_2} \| f_{k_2}(t)\|_{L^2}\big) \|  f_{k_1}(t)\|_{L^2}   +  \| \nabla_\xi  \widehat{f}_{k_1}(t, \xi)\|_{L^2}	\]
\be\label{eqn438}
 \times \| \nabla_\xi  \widehat{f}_{k_2}(t, \xi)\|_{L^2}\big]\lesssim 2^{m -2k+k_1/2+ k_2/2+ 3\min\{k,k_2\}/2 +\epsilon  k_{-}+\gamma(k_1+k_2)-2(k_{1,+}+k_{2,+})} \epsilon_1^2.
\ee
Combine estimates  (\ref{eqn320}), (\ref{eqn390}), (\ref{eqn401}), (\ref{eqn402}),  (\ref{eqn420}),(\ref{eqn423}), (\ref{eqn437})  and (\ref{eqn438}), it is easy to see our desired estimate (\ref{eqn410}) holds.

\end{proof}
\begin{lemma}\label{L2weighttwo}
Under the bootstrap assumption \textup{(\ref{smallness})}, the following estimates hold for $k_2 \leq k_1-10$,
\[
\| 	I_{k,k_1,k_2}^{m,3}(\xi) \|_{L^2}  \lesssim 2^{-k/2 +\epsilon  k_{-}+\gamma(k_1+k_2)-2(k_{1,+}+k_{2,+})}
\]
\be\label{eqn760}
 \times \min\{2^{m +2k_2  } \big(1+2^{(m+2k_1) } \big) ,2^{-m/2+  k_2/2- 3k/2 }\}\epsilon_1^2. 
 \ee
 \[
 \| J_{k,k_1,k_2}^{ m,2,1}(\xi) \|_{L^2} + \| J_{k,k_1,k_2}^{ m,2,2}(\xi) \|_{L^2} +\| J_{k,k_1,k_2}^{ m,2,3}(\xi) \|_{L^2} \lesssim 2^{-3k/2+ \epsilon  k_{-}+\gamma(k_1+k_2)-2(k_{1,+}+k_{2,+})}
 \]
 \be\label{eqn879}
\big(  \min\{ 2^{-m/2-k_2 }, 2^{m +2k_2  }\}  + \min\{ 2^{-m/2-k_2/2-k/2 }, 2^{m+k_2+k  }\}\big)\epsilon_1^2.
 \ee
\end{lemma}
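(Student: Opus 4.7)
My plan is to follow the template of the proof of Lemma \ref{L2weightone}, with the phase $\Phi=\xi\cdot(\xi-\eta)$ replacing $\xi\cdot\eta$. Since $(k_1,k_2)\in\chi_k^2$ forces $|k-k_1|\leq 10$ and $k_2\leq k_1-10$, on the support one has $|\xi|\sim 2^k\sim 2^{k_1}$ and $|\eta|\sim 2^{k_2}$, so $|\nabla_\eta\Phi|=|\xi|\sim 2^k$. Each integration by parts in $\eta$ is therefore performed via
\begin{equation*}
e^{i2s\xi\cdot(\xi-\eta)} = \frac{-\xi}{2is|\xi|^2}\cdot\nabla_\eta\bigl(e^{i2s\xi\cdot(\xi-\eta)}\bigr),
\end{equation*}
gaining a factor $1/(s\,2^k)$ at the cost of one $\nabla_\eta$ on the amplitude. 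The one notable arithmetic difference with the earlier lemma is that the amplitude now contains the multiplicative factor $2\xi_l-\eta_l$ (of size $2^k$) in place of $\eta_l$ (of size $2^{k_2}$); this inflation is exactly what forces the large $2^{m+2k_1}$, respectively $2^{m+k}$, corrections in the direct bound and is then cleanly absorbed by the IBP weight.

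First I would handle the first member of each minimum in (\ref{eqn760})--(\ref{eqn879}), for which no IBP is needed. The $L^2$--$L^\infty$ bilinear estimate (\ref{bilinearestimate}), Bernstein $\|g_{k_2}\|_{L^\infty}\lesssim 2^{3k_2/2}\|g_{k_2}\|_{L^2}$ on the low-frequency factor, the profile bounds (\ref{eqn200}), and the symbol assumption (\ref{assumption}) together yield a base bound $\sim 2^{m+2k_2}$ for the pieces of $I^{m,3}$ and $J^{m,2,1}$ that carry no $s$-weight, and the remaining pieces pick up the pointwise bounds $|s(2\xi_l-\eta_l)|\lesssim 2^{m+k}$ or $|s^2(2\xi_l-\eta_l)(2\xi_n-\eta_n)|\lesssim 2^{2m+2k}$, producing the additional $2^{m+2k_1}$ and $2^{m+k}$ factors respectively. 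Next I would handle the second (IBP) member of each minimum by integrating by parts in $\eta$ once or twice, as dictated by the power of $s$ in the amplitude, until the $s$-weights are eliminated; each resulting integrand is then structurally parallel to one already estimated in Lemma \ref{L2weightone} and is controlled by the same combination of $L^2$--$L^\infty$ and $L^4$--$L^4$ bilinear estimates, using (\ref{eqn200}), (\ref{eqn201}), (\ref{L4estimate1}), (\ref{L4estimate2}) and (\ref{assumption}). Taking the minimum of the two bounds yields (\ref{eqn760}) and (\ref{eqn879}).

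The main obstacle is combinatorial bookkeeping. After the IBPs in $\eta$ are combined with the $\nabla_\xi$ derivatives and multiplicative $\xi$-weights already present in $I^{m,3}$ and the $J^{m,2,\cdot}$ terms, the Leibniz rule scatters these derivatives across the three factors $q(\eta,\xi-\eta)$, $\widehat{\bar f}_{k_1}(\xi-\eta)$ and $\widehat f_{k_2}(\eta)$ and produces a lengthy list of sub-terms, each of which has to be checked individually. The cost of every $\nabla_\xi$ landing on $\widehat f_{k_1}$ is absorbed in $L^2$ by the $Z$-norm (one derivative costs $2^{-k/2}$ and a second an additional $2^{-k}$, by (\ref{eqn200})), derivatives on $q$ are absorbed by (\ref{assumption}) with only a $2^{\epsilon k_-}$ overhead, and any $\nabla_\eta$ landing on $\widehat f_{k_2}(\eta)$ is again controlled by (\ref{eqn200}). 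Consolidating all these weights produces the overall prefactors $2^{-k/2}$ in (\ref{eqn760}) and $2^{-3k/2}$ in (\ref{eqn879}); no new analytic input is required beyond what has already been used in Lemmas \ref{L2derivatieveestimate} and \ref{L2weightone}.
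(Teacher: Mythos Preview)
Your approach differs from the paper's in a way that creates a genuine gap. You propose to mimic Lemma~\ref{L2weightone} by integrating by parts in $\eta$ through the phase $\xi\cdot(\xi-\eta)$. The paper does \emph{not} do this for the second (decay) member of each minimum: it instead integrates by parts in the time variable $s$, exploiting the lower bound $|\xi\cdot(\xi-\eta)|\sim|\xi|^2\sim 2^{2k}$ available when $|\eta|\ll|\xi|$. After this normal-form step the amplitude picks up $\partial_s$ acting on the profiles, and the resulting terms are controlled using the bounds on $\partial_t f_k$ (Lemma~\ref{L2derivatieveestimate}, estimates (\ref{eqn230}) and (\ref{e300})) and on $\partial_t\nabla_\xi\widehat{f}_k$ (Lemma~\ref{fixedtimeL2}); for $J^{m,2,3}$ one additional IBP in $\eta$ is layered on top. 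These auxiliary time-derivative bounds are precisely the ``new analytic input'' you say is not required.

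Concretely, your IBP-in-$\eta$ scheme fails to reach the target $2^{-m/2+k_2/2-3k/2}$ in (\ref{eqn760}). Take the $s$-weighted piece of $I^{m,3}_{k,k_1,k_2}$ and perform one IBP in $\eta$ (cancelling the factor $s$). Among the Leibniz terms is
\[
\int_{t_1}^{t_2}\!\!\int e^{i2s\xi\cdot(\xi-\eta)}\,\frac{\xi}{|\xi|^2}\cdot(2\xi_l-\eta_l)\,q(\eta,\xi-\eta)\,\bigl(\nabla_\eta\widehat{\bar f}_{k_1}(\xi-\eta)\bigr)\,\widehat{f}_{k_2}(\eta)\,\psi_k(\xi)\,d\eta\,ds,
\]
whose best $L^2$ bound via $L^2$--$L^\infty$ or $L^4$--$L^4$ is of order $2^{-m/2-k/2-k_2}$ (up to the common $2^{\epsilon k_-+\gamma(k_1+k_2)}$ weight). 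This exceeds the target by a factor $2^{3(k-k_2)/2}$, which is unbounded as $k_2\to-\infty$. The analogous term where $\nabla_\eta$ hits $q(\eta,\xi-\eta)$ (costing $2^{-k_2}$ by (\ref{assumption})) or $\widehat{f}_{k_2}(\eta)$ suffers the same loss. The structural reason is that here the large weight $(2\xi_l-\eta_l)\sim 2^k$ replaces the small weight $\eta_l\sim 2^{k_2}$ of Lemma~\ref{L2weightone}, so one IBP in $\eta$ no longer buys enough; and since the $Z$-norm only controls two $\xi$-derivatives of $\widehat f$, you cannot simply iterate. The time-IBP of the paper sidesteps this entirely by trading a factor $2^{-2k}$ for control of $\partial_t f$, with no loss in $k_2$.
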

\begin{proof}
Note that $|k-k_1|\leq 10$.
 Recall (\ref{eqn240}) and (\ref{eqn271}). From the $L^2-L^\infty$ type bilinear estimate and the $L^\infty\rightarrow L^2$ type Sobolev embedding, the following estimates hold,
 \[
 \| I_{k,k_1,k_2}^{m,3}(\xi)\|_{L^2} \lesssim \sup_{t\in [2^{m-1},2^m]} 2^{m +\epsilon  k_{-} }(1+2^{m+2k_1})\big( \| \nabla_\xi\widehat{f_{k_1}}(t, \xi)\|_{L^2} + 2^{-k_1}\| \  {f_{k_1}}(t )\|_{L^2} \big) \| e^{ it \Delta} f_{k_2}(t)\|_{L^\infty} 
 \]
 \be\label{eqn251}
 \lesssim 2^{m +2k_2 -k_1/2 + \epsilon  k_{-}+\gamma(k_1+k_2)-2(k_{1,+}+k_{2,+})}(1+2^{m+2k_1})\epsilon_1^2.
 \ee
\[
  \| J_{k,k_1,k_2}^{ m,2,1}(\xi)\|_{L^2} \lesssim  \sup_{t\in [2^{m-1},2^m]}2^{m +\epsilon k_{ -}} \| e^{ it \Delta} f_{k_2}(t)\|_{L^\infty}   ( \| \nabla_\xi^2\widehat{f_{k_1}}(t, \xi)\|_{L^2} +2^{-k_1} \| \nabla_\xi \widehat{f_{k_1}}(t, \xi)\|_{L^2}
\]
 \be\label{eqn430}
 + 2^{-2k_1}\| \  {f_{k_1}}(t )\|_{L^2}) \lesssim 2^{-3k/2 + \epsilon  k_{-}+\gamma(k_1+k_2)-2(k_{1,+}+k_{2,+})}\min\{2^{m + 2k_2  } , 2^{-m/2-k_2  }\}\epsilon_1^2.
 \ee

 Note that the following estimate holds when $|\eta|\leq 2^{-5}|\xi|$,
\[
|\xi\cdot (\xi-\eta)|\sim |\xi|^2\sim |\xi-\eta|^2.
\]
Hence, we take the advantage of high oscillation in time by doing integration by parts in time once for $I_{k,k_1,k_2}^{m,3}(\xi)$. As a result, we have
\[
I_{k,k_1,k_2}^{m,3}(\xi):=\int_{t_1}^{t_2} \int_{\R^3} e^{i 2s \xi \cdot(\xi-\eta)}  \frac{ i \psi_k(\xi)}{2 \xi \cdot \xi-\eta} \p_s \Big[\p_{\xi_l}\big( q(\eta,\xi-\eta )   \widehat{\bar{f}_{k_1}}(t, \xi-\eta)\big)\widehat{  {f}_{k_2}}(t, \eta)   
\]
\[
+i2s (2\xi_l-\eta_l)  q(\eta,\xi-\eta )\widehat{\bar{f}_{k_1}}(t, \xi-\eta) \widehat{  {f}_{k_2}}(t, \eta) \Big] d \eta d s + \sum_{i=1,2} (-1)^{i-1}\int_{\R^3} e^{i 2 t_i \xi \cdot(\xi-\eta)}  \frac{i \psi_k(\xi)}{2\xi \cdot \xi-\eta} 
\]
\[
\times\Big[\p_{\xi_l}\big( q(\eta,\xi-\eta )   \widehat{\bar{f}_{k_1}}(t_i, \xi-\eta)\big)\widehat{  {f}_{k_2}}(t_i, \eta)   +i2t_i (2\xi_l-\eta_l)  q(\eta,\xi-\eta )\widehat{\bar{f}_{k_1}}(t_i, \xi-\eta) \widehat{  {f}_{k_2}}(t_i, \eta) \Big] d \eta .
\]
From the $L^2-L^\infty$ type bilinear estimate, (\ref{eqn230}) in Lemma \ref{L2derivatieveestimate} and    (\ref{eqn210}) in Lemma \ref{fixedtimeL2}, we have
\[
\|I_{k,k_1,k_2}^{m,3}(\xi)\|_{L^2} \lesssim 2^{-2k+\epsilon k_{-}}\big[\big( \| \nabla_\xi \widehat{f_{k_1}}(t, \xi)\|_{L^2}  + 2^{-k_1}\| f_{k_1}(t)\|_{L^2} \big) \big( 2^{m }\| e^{ it \Delta} \p_t f_{k_2}(t)\|_{L^\infty} \]
\[+ \| e^{ it \Delta} f_{k_2}\|_{L^\infty}\big) +  2^{m}\big(\| \p_t \nabla_\xi \widehat{f_{k_1}}(t, \xi)\|_{L^2}  + 2^{-k_1}\| \p_t f_{k_1}\|_{L^2}\big) \| e^{ it \Delta} f_{k_2}\|_{L^\infty}  \]
\[
+  (2^{m+k }\| e^{ it \Delta} f_{k_1}\|_{L^\infty} +2^{2m+k}\| e^{ it \Delta} \p_t f_{k_1}\|_{L^\infty} ) \| f_{k_2}\|_{L^2} + 2^{2m+k_1}\| e^{ it \Delta} f_{k_1}\|_{L^\infty} \| \p_t f_{k_2}\|_{L^2}
\big]
\]
\be\label{eqn257}
\lesssim   2^{-m/2+  k_2/2- 2k+ \epsilon  k_{-}+\gamma(k_1+k_2)-2(k_{1,+}+k_{2,+})}\epsilon_1^2.
\ee
Combine estimates (\ref{eqn251}) and (\ref{eqn257}), it is easy to see our desired estimate (\ref{eqn760}) holds.

Recall (\ref{eqn272}) and (\ref{eqn274}).  Very similarly, we also do integration by parts in time once for  $J_{k,k_1,k_2}^{m,2,2}(\xi)$  and  $J_{k,k_1,k_2}^{m,2,3}(\xi)$. After that, for  $J_{k,k_1,k_2}^{m,2,3}(\xi)$,  we do integration by parts in $\eta$ once. As a result, we have 
\[
J_{k,k_1,k_2}^{ m,2,2}(\xi):=\sum_{(l_1,n_1) =(l,n),(n,l) }\int_{t_1}^{t_2} \int_{\R^3} e^{i 2s \xi \cdot(\xi-\eta)} \frac{ -   (2\xi_{n_1}-\eta_{n_1}) }{\xi \cdot(\xi-\eta)  }  \p_s \big(s\p_{\xi_{l_1}}  \big( q(\eta,\xi-\eta)   \widehat{\bar{f}_{k_1}}(t, \xi-\eta)\big)\]
\[
  \times \widehat{f_{k_2}}(t, \eta)\big) \psi_k(\xi) d\eta d s + \sum_{i=1,2} (-1)^i \int_{\R^3} e^{i 2t_i \xi \cdot(\xi-\eta)} \frac{   t_i(2\xi_{n_1}-\eta_{n_1}) \psi_k(\xi)}{\xi \cdot (\xi-\eta) } \]
  \[\times   \widehat{{f}_{k_2}}(t_i, \eta)   \p_{\xi_{l_1}}  \big( q(\eta,\xi-\eta) \widehat{\bar{f}_{k_1}}(t_i, \xi-\eta) \big)    d\eta,
\]
\[
J_{k,k_1,k_2}^{ m,2,3}(\xi):=\sum_{i=1,2} J_{k,k_1,k_2}^{ m,2,3;i}(\xi), \quad  J_{k,k_1,k_2}^{ m,2,3;1}(\xi):= - \int_{t_1}^{t_2} \int_{\R^3} e^{i 2s \xi \cdot(\xi-\eta)}  \psi_k(\xi) \]
\[\times \frac{\xi}{|\xi|^2}\cdot \p_s\Big(  s \nabla_\eta \big(  \widehat{\bar{f}_{k_1}}(t, \xi-\eta) \frac{   (2\xi_{l}-\eta_{l})(2\xi_n -\eta_n)  q(\eta,\xi-\eta)}{\xi \cdot(\xi-\eta) }   \big) \widehat{{f}_{k_2}}(t, \eta) \Big)   d\eta d s\]
\[
 + \sum_{i=1,2} (-1)^i \int_{\R^3} e^{i 2 t_i \xi \cdot\eta} \psi_k(\xi) t_i  \widehat{{f}_{k_2}}(t_i, \eta) \frac{\xi}{|\xi|^2}  \cdot  \nabla_\eta \Big(  \frac{   (2\xi_{l}-\eta_{l})(2\xi_n -\eta_n)  q(\eta,\xi-\eta)}{\xi \cdot(\xi-\eta) } \widehat{\bar{f}_{k_1}}(t_i, \xi-\eta) \Big) 
    d\eta ,
\]
\[
J_{k,k_1,k_2}^{ m,2,3;2}(\xi):= - \int_{t_1}^{t_2} \int_{\R^3} e^{i 2s \xi \cdot(\xi-\eta)}  \psi_k(\xi) \frac{\xi}{|\xi|^2}\cdot \p_s\Big( s \nabla_\eta  \widehat{{f}_{k_2}}(t, \eta)     \frac{  (2\xi_{l}-\eta_{l})(2\xi_n -\eta_n)  q(\eta,\xi-\eta)}{\xi \cdot(\xi-\eta) }\]
\[ \times \widehat{\bar{f}_{k_1}}(t, \xi-\eta)   \Big)   d\eta d s + \sum_{i=1,2} (-1)^i \int_{\R^3} e^{i 2 t_i \xi \cdot\eta} \psi_k(\xi) t_i \widehat{\bar{f}_{k_1}}(t_i, \xi-\eta)  \frac{\xi}{|\xi|^2}  \cdot \nabla_\eta \widehat{{f}_{k_2}}(t_i, \eta) 
 \]
\be\label{eqn285} 
\times  \frac{   (2\xi_{l}-\eta_{l})(2\xi_n -\eta_n)  q(\eta,\xi-\eta)}{\xi \cdot(\xi-\eta) }  
    d\eta .
\ee
From the $L^2-L^\infty$ type bilinear estimate, the $L^\infty\rightarrow L^2$ type Sobolev embedding, (\ref{eqn230}) and (\ref{e300}) in Lemma \ref{L2derivatieveestimate}, and (\ref{eqn210}) in Lemma \ref{fixedtimeL2}, we have
\[
\| J_{k,k_1,k_2}^{ m,2,2}(\xi)\|_{L^2} +\| J_{k,k_1,k_2}^{ m,2,3;1}(\xi)\|_{L^2}  \lesssim 2^{ m-k+\epsilon k_{-}}\big[ \big( 2^{m} \| \p_t  \nabla_\xi \widehat{f_{k_1}}(t, \xi)\|_{L^2} + \|\nabla_\xi \widehat{f_{k_1}}(t, \xi)\|_{L^2}   \big) \| e^{ it\Delta} f_{k_2}\|_{L^\infty}\]
\[
 + 2^{-k_2}  \big( \| f_{k_2}(t)\|_{L^2} +   2^{m} \| \p_t f_{k_2}(t)\|_{L^2}\big) \min\{\| e^{ it \Delta} f_{k_1}(t)\|_{L^\infty} + 2^m \| e^{ it \Delta} \p_t f_{k_1}(t)\|_{L^\infty}  , \]
\[  2^{3k_2/2} \big(\| f_{k_1}\|_{L^2}  + 2^m\| \p_t f_{k_1}\|_{L^2} \big)  \} \big]\lesssim  2^{-3k/2+  \epsilon  k_{-}+\gamma(k_1+k_2)-2(k_{1,+}+k_{2,+})}\epsilon_1^2 
\]
\be\label{eqn429}
\times \big(  \min\{ 2^{-m/2-k_2 }, 2^{m +2k_2  }\}  + \min\{ 2^{-m/2-k_2/2-k/2 }, 2^{m+k_2+k  }\}\big),
\ee

\[
\| J_{k,k_1,k_2}^{ m,2,3;2}(\xi)\|_{L^2} \lesssim 2^{m-k+\epsilon k_{-} }\big[ \big( 2^m \| \p_t \nabla_\xi \widehat{f}_{k_2}(t, \xi) \|_{L^2} + \| \nabla_\xi \widehat{f}_{k_2}(t, \xi)\|_{L^2} \big)  \min\{\| e^{ it \Delta} f_{k_1}(t)\|_{L^\infty}\]
\[
    +  2^{m} \| e^{ it \Delta} \p_t f_{k_1}\|_{L^\infty}, 2^{3k_2/2}\big(\| f_{k_1}(t)\|_{L^2} + 2^m \| \p_t f_{k_1}(t)\|_{L^2} \big) \} \big]
\]
\be\label{eqn428}
\lesssim  2^{-3k/2+\epsilon  k_{-}+\gamma(k_1+k_2)-2(k_{1,+}+k_{2,+})}\min\{ 2^{-m/2-k_2/2-k/2}, 2^{m+k_2+k } \}\epsilon_1^2.
\ee
 Combine (\ref{eqn430}), (\ref{eqn429}), and (\ref{eqn428}), it is easy to see our desired estimate (\ref{eqn879}) holds.

\end{proof}
\begin{lemma}\label{fixedtimeL2}
Under the bootstrap assumption \textup{(\ref{smallness})}, the following estimate holds for any $k\in \mathbb{Z}$,
\be\label{eqn210}
\| \p_t \nabla_\xi \widehat{f}(t, \xi)\psi_k(\xi)\|_{L^2} \lesssim 2^{-m -k/2+\gamma k-2k_{+}}\epsilon_1^2. 
\ee
\end{lemma}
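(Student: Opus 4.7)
The plan is as follows. Since $\psi_k$ is time-independent, $\p_t \nabla_\xi \widehat f(t,\xi)\psi_k(\xi) = \nabla_\xi \p_t \widehat f(t,\xi)\psi_k(\xi)$. Using the equation (\ref{mainequation}) and the definition $f = e^{-it\Delta}u$ gives the identity $\p_t \widehat f(t,\xi) = e^{it|\xi|^2}\widehat{Q(u,\bar u)}(t,\xi)$; expressed in terms of the profile after Littlewood--Paley decomposition, this is precisely the \emph{integrand} of the Duhamel formula (\ref{duhamel1}) at time $t$:
\begin{align*}
\p_t \widehat f(t,\xi)\psi_k(\xi) = \sum_{(k_1,k_2)\in\chi_k^1\cup\chi_k^2}\int_{\R^3}\psi_k(\xi)\Big[&e^{2it\xi\cdot\eta}q(\xi-\eta,\eta)\widehat{f_{k_1}}(t,\xi-\eta)\widehat{\bar f_{k_2}}(t,\eta)\\
&+e^{2it\xi\cdot(\xi-\eta)}q(\eta,\xi-\eta)\widehat{\bar f_{k_1}}(t,\xi-\eta)\widehat{f_{k_2}}(t,\eta)\Big]d\eta .
\end{align*}
Applying $\nabla_\xi$ to both sides yields a pointwise-in-time analog of (\ref{eqn20}), with three types of terms corresponding to the derivative landing on (I) the symbol $q$ or on $\widehat{f_{k_1}}(\xi-\eta)$ (resp.\ $\widehat{\bar f_{k_1}}(\xi-\eta)$); (II) the phase $e^{2it\xi\cdot\eta}$, producing a factor $2it\eta_l$; or (III) the phase $e^{2it\xi\cdot(\xi-\eta)}$, producing $2it(2\xi_l-\eta_l)$.

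I then estimate each of these three types exactly as in the proofs of Lemmas~\ref{L2weightone} and~\ref{L2weighttwo}, with one modification: because there is no outer $\int_{t_1}^{t_2}ds$, the factor $2^m$ coming from $|t_2-t_1|$ is absent from every bound. For type (II) in the $\chi_k^1$ case I integrate by parts in $\eta$ using $\frac{\xi}{|\xi|^2}\cdot\nabla_\eta e^{2it\xi\cdot\eta}=2ite^{2it\xi\cdot\eta}$; this cancels the $2it$ factor at the cost of a $2^{-k}$ loss, exactly as in (\ref{eqn340}). For type (III) in the $\chi_k^2$ case, the time-integration-by-parts argument used in (\ref{eqn257}) is neither needed nor available here (there is no time integration to integrate against): instead, the factor $2it(2\xi_l-\eta_l)$ of size $2^{m+k}$ is absorbed directly by the $L^\infty$ decay $\|e^{it\Delta}f_{k_2}\|_{L^\infty}\lesssim 2^{-3m/2-k_2+\gamma k_2-2k_{2,+}}\epsilon_1$ from (\ref{eqn201}), applied via the $L^2$-$L^\infty$ bilinear estimate of Lemma~\ref{bilinearlemma}.

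The main technical point is to verify that the target bound $2^{-m-k/2+\gamma k-2k_+}\epsilon_1^2$ actually comes out in every frequency regime. In the difficult regime where the output frequency $k$ is very negative, one takes the \emph{minimum} of two estimates: an $L^\infty$-$L^2$ bilinear bound (which contributes the $2^{-3m/2}$ decay factor from (\ref{eqn201}) but loses a $2^{-k}$ power from the large $|\xi|^{-1}$ produced by integration by parts in $\eta$), and a volume-based estimate (using $2^{3k/2}$ from the support of $\xi$ in $P_k$ together with $L^2$-$L^2$ Cauchy-Schwarz). These two bounds scale oppositely in $k$, balance each other at $k\sim -m$, and their minimum delivers the desired bound with a margin of $2^{-(\epsilon-\gamma)|k_-|}$; this spare factor originates from the $2^{\epsilon k_-}$ smallness of the symbol in (\ref{assumption}) being strictly stronger than the $2^{\gamma k}$ growth permitted by the $Z$-norm, which is precisely where the assumption $\gamma<\epsilon$ enters. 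Summing the resulting bounds over $(k_1,k_2)\in\chi_k^1\cup\chi_k^2$ yields the claim.
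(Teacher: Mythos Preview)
Your plan is sound for types (I) and (II), but there is a genuine gap in your treatment of the type (III) term in the $\chi_k^2$ regime, namely
\[
\int_{\R^3} e^{2it\xi\cdot(\xi-\eta)}\,2it(2\xi_l-\eta_l)\,q(\eta,\xi-\eta)\,\widehat{\bar f_{k_1}}(t,\xi-\eta)\,\widehat{f_{k_2}}(t,\eta)\,\psi_k(\xi)\,d\eta,\qquad |k-k_1|\le 10,\ k_2\le k_1-10.
\]
Your claim that the factor $2it(2\xi_l-\eta_l)\sim 2^{m+k}$ is ``absorbed directly by the $L^\infty$ decay $\|e^{it\Delta}f_{k_2}\|_{L^\infty}$'' is false. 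At the critical interaction $k=k_1\sim 0$, $k_2\sim -m/2$, the $L^2$--$L^\infty$ bilinear estimate gives
\[
2^{m+k}\,\|f_{k_1}\|_{L^2}\,\|e^{it\Delta}f_{k_2}\|_{L^\infty}\ \sim\ 2^{m}\cdot\epsilon_1\cdot 2^{-m}\epsilon_1\ =\ \epsilon_1^2,
\]
which is a full factor $2^{m}$ too large compared to the target $2^{-m-k/2+\gamma k}\epsilon_1^2\sim 2^{-m}\epsilon_1^2$. Placing $f_{k_1}$ in $L^\infty$ instead does not close the gap either.

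The remedy is precisely the integration by parts in $\eta$ that you discarded for this term: since $\frac{-\xi}{|\xi|^2}\cdot\nabla_\eta e^{2it\xi\cdot(\xi-\eta)}=2it\,e^{2it\xi\cdot(\xi-\eta)}$, one gains $2^{-m-k}$ per integration exactly as for the other phase. In fact the paper's proof of this lemma does \emph{not} follow Lemmas~\ref{L2weightone}--\ref{L2weighttwo} with the time integral stripped; it groups the terms as $F^1_{k,k_1,k_2}$ (your type I, for both phases) and $F^2_{k,k_1,k_2}$ (your types II and III together), then integrates by parts in $\eta$ \emph{once} for $F^1$ and \emph{twice} for $F^2$, uniformly for both phases, to obtain the single bound (\ref{eqn200}). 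The minimum of (\ref{eqn200}) and the direct volume bound (\ref{eqn204}), balanced at the threshold $k_2\sim -m-k$ in $\chi_k^2$ (respectively $k_1\sim -m-k$ in $\chi_k^1$), is what actually produces the target $2^{-m-k/2+\gamma k-2k_+}\epsilon_1^2$.
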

\begin{proof}
 Note that the following equality holds for any $l\in \{1,2\},$
\[
\p_{\xi_l}\p_t \widehat{f }(t, \xi)\psi_k(\xi)=  \sum_{(k_1,k_2)\in \chi_k^1 \cup \chi_k^2 }  \sum_{i=1,2} F_{k,k_1,k_2}^i, \quad  F_{k,k_1,k_2}^1 =   \int_{\R^3}\big( e^{i 2 t\xi \cdot \eta}   \widehat{\bar{f}_{k_2}}(t, \eta) \]
\be\label{eqn491}
\times  \p_{\xi_l}\big( q(\xi-\eta,\eta)  \widehat{f_{k_1}}(t, \xi-\eta)\big)+ e^{i 2 t\xi \cdot (\xi-\eta) } \p_{\xi_l}\big( q(\eta,\xi-\eta)  \widehat{\bar{f}_{k_1}}(t, \xi-\eta)\big) \widehat{ {f}_{k_2}}(t, \eta)\big) \psi_k(\xi) d \eta, 
\ee
\[F_{k,k_1,k_2}^2 =    \int_{\R^3}   i2 t \big(e^{i 2 t\xi \cdot \eta} \eta_l q(\xi-\eta,\eta)  \widehat{f_{k_1}}(t, \xi-\eta)  \widehat{\bar{f}_{k_2}}(t, \eta)\]
\be\label{eqn492}
 +e^{i 2 t\xi \cdot (\xi-\eta)} (2\xi_l-\eta_l) q(\eta,\xi-\eta )  \widehat{\bar{f}_{k_1}}(t, \xi-\eta)  \widehat{  {f}_{k_2}}(t, \eta)  \big) \psi_k(\xi) d \eta.
\ee
From the $L^2-L^\infty$ type bilinear estimate (\ref{bilinearestimate}) in Lemma \ref{bilinearlemma} and  the  $L^\infty\rightarrow L^2$ type Sobolev embedding, the following estimate holds,
\[
\sum_{i=1,2} \|  F_{k,k_1,k_2}^i\|_{L^2 } \lesssim 2^{ \epsilon  k_{-}+ 3\min\{k,k_2\}/2}\big( \| \nabla_\xi \widehat{f_{k_1}}(t, \xi)\|_{L^2} + 2^{-k} \| f_{k_1}(t)\|_{L^2} + 2^{m+k_1} \| f_{k_1}(t)\|_{L^2} \big) \| f_{k_2}(t)\|_{L^2} 
\]
\be\label{eqn204}
\lesssim 2^{ 3\min\{k,k_2\}/2+ k_2/2-k+k_1/2+ \epsilon  k_{-}+\gamma(k_1+k_2)-2(k_{1,+}+k_{2,+}) }(1+2^{m+k+k_1})\epsilon_1^2 .
\ee
 
On the other hand, after doing integration by parts in $\eta$ once for $ F_{k,k_1,k_2}^1$ and doing integration by parts in $\eta$ twice for $ F_{k,k_1,k_2}^2 $, we have
\[
F_{k,k_1,k_2}^1 =   \int_{\R^3} e^{i 2 t\xi \cdot \eta} \frac{i\xi}{2t |\xi|^2}\cdot\nabla_\eta \big( \p_{\xi_l}\big( q(\xi-\eta,\eta)  \widehat{f_{k_1}}(t, \xi-\eta)\big) \widehat{\bar{f}_{k_2}}(t, \eta)\big) \psi_k(\xi) d \eta,
\]
\[
+ \int_{\R^3} e^{i 2 t\xi \cdot (\xi-\eta)} \frac{i\xi}{2t |\xi|^2}\cdot\nabla_\eta \big( \p_{\xi_l}\big( q(\eta, \xi-\eta )  \widehat{\bar{f}_{k_1}}(t, \xi-\eta)\big) \widehat{  {f}_{k_2}}(t, \eta)\big) \psi_k(\xi) d \eta,
\]
\[
F_{k,k_1,k_2}^2 =     \int_{\R^3} e^{2i t\xi \cdot \eta}\frac{-\xi}{4t|\xi|^2}\cdot \nabla_\eta \Big( \frac{\xi}{|\xi|^2}\cdot \nabla_\eta \big( \eta_{l}  q(\xi-\eta,\eta)  \widehat{f_{k_1}}(t, \xi-\eta) \widehat{\bar{f}_{k_2}}(t, \eta) \big)\Big)\psi_k(\xi) d \eta
\]
\[
 + \int_{\R^3} e^{2i t\xi \cdot (\xi-\eta)}\frac{-\xi}{4t|\xi|^2}\cdot \nabla_\eta \Big( \frac{\xi}{|\xi|^2}\cdot \nabla_\eta \big( (2\xi_l-\eta_l) q(\eta,\xi-\eta )  \widehat{\bar{f}_{k_1}}(t, \xi-\eta)  \widehat{  {f}_{k_2}}(t, \eta)  \big)\Big)\psi_k(\xi) d \eta.
\]
Therefore, from  the $L^2-L^\infty$ type and the $L^4-L^4$ type bilinear estimates (\ref{bilinearestimate}) in Lemma \ref{bilinearlemma}, the following estimate holds,
\[
\sum_{i=1,2}\|F_{k,k_1,k_2}^i\|_{L^2}  \lesssim 2^{-m-2k+k_1+\epsilon k_{ -}}\big[(\| \nabla_\xi^2 \widehat{f_{k_2}}(t, \xi)\|_{L^2} + 2^{-k_2}\| \nabla_\xi \widehat{f_{k_2}}(t, \xi)\|_{L^2} +2^{-2k_2}\| f_{k_2}(t)\|_{L^2})
\]
\[
\times \| e^{ it \Delta} f_{k_1}\|_{L^\infty}+   \| e^{ it \Delta} \mathcal{F}^{-1}[\nabla_\xi  \widehat{f}_{k_1}(t, \xi)]\|_{L^4}\big(  2^{-k_2}\| e^{ it \Delta} f_{k_2}(t)\|_{L^4}+ \| e^{ it \Delta} \mathcal{F}^{-1}[\nabla_\xi  \widehat{f}_{k_2}(t, \xi)]\|_{L^4}\big)\big]
\]
\be\label{eqn200}
  \lesssim   2^{-5m/2-3k_2/2-2k +  \epsilon  k_{-}+\gamma(k_1+k_2)-2(k_{1,+}+k_{2,+})}\epsilon_1^2.
\ee
Hence, from (\ref{eqn204}) and (\ref{eqn200}), we have
\[
 \sum_{i=1,2}   \sum_{(k_1,k_2)\in \chi_k^1\cup \chi_k^2 }   \|F_{k,k_1,k_2}^i\|_{L^2}  \lesssim \sum_{|k_1-k_2|\leq 10,  k-10\leq k_1\leq -m -k} 2^{\epsilon k_{-}+ k/2 +k_1 +2\gamma k_1-4k_{1,+}} (1+2^{m+k+k_1}) \epsilon_1^2 \]
 \[
+ \sum_{|k_1-k_2|\leq 10,   k_1\geq \max\{ -m -k, k-10\}} 2^{-5m/2-3k_1/2-2k +\epsilon k_{-}+ 2\gamma k_1 - 4k_{1,+}} \epsilon_1^2 + \sum_{|k_1-k|\leq 10, k_2\leq \min\{-m-k, k_1\}}2^{  \epsilon  k_{-}}
 \]
 \[
 2^{  \gamma(k_1+k_2)-2 k_{1,+}  +2k_2 -k_1/2  } (1+2^{m+2k_1}) \epsilon_1^2+   \sum_{|k_1-k|\leq 10,   -m-k\leq k_2\leq k_1}  2^{-5m/2-3k_2/2-2k }  
 \]
 \[  
 \times 2^{\epsilon  k_{-}+\gamma(k_1+k_2)-2(k_{1,+}+k_{2,+})}\epsilon_1^2\lesssim 2^{-m -k/2+\gamma k-2k_{+}}\epsilon_1^2.  
\]
Hence finishing the proof.
\end{proof}

\subsection{The proof of Theorem \ref{maintheorem} }
Firstly, we do the energy estimate. Recall (\ref{eqn201}). It's easy to see the following estimate holds for any $t\in[0,T]$,
\[
\| u(t)\|_{H^{10}}^2 = \| f(t)\|_{H^{10}}^2\lesssim \epsilon_0^2 + \sum_{k\in \mathbb{Z}} \sum_{(k_1,k_2)\in \chi_k^1 \cup \chi_k^2}  \int_0^{t}  2^{\epsilon k_{-} + 20 k_{+}}  \| f_k(s)\|_{L^2}  \| f_{k_1}(s)\|_{L^2} \| e^{ it \Delta}  f_{k_2}(s)  \|_{L^\infty }  d s 
\] 
\be\label{conclusion2}
\lesssim \epsilon_0^2 + \int_0^t \| f(s)\|_{H^{10}}^2 \frac{\epsilon_1}{(1+s)^{ 1+\gamma/2}} d s \lesssim \epsilon_0^2.
\ee

Lastly, we proceed to estimate the $Z$-norm of the profile $f (t) $.  Recall (\ref{duhamel1}), (\ref{eqn20}), and (\ref{eqn1}).  For any fixed $k\in \mathbb{Z}$, we have
\[
\| f_k(t) \|_{Z}\lesssim  \epsilon_0 + \sum_{1\leq m\leq \log(T)} 2^{-\gamma k +2k_{+}}\big[ 2^{-k/2 } \sum_{k_2\leq k_1, k-3\leq k_1}  \|H_{k,k_1,k_2}^m  \|_{L^2}   
\]
\[
 + \sum_{k_2\leq k_1+10, k-3\leq k_1} \big(\sum_{ i=1,2} 2^{k/2 }\| I^{m ,i}_{k,k_1,k_2}\|_{L^2} +\sum_{i=1,2,3}  2^{3k/2 } \|J_{k,k_1,k_2}^{m,  1,i}\|_{L^2}\big)
\]
\be\label{eqn500}
 + \sum_{k_2\leq k_1-10, k-3\leq k_1 \leq k+3}\big(  2^{k/2  }\| I^{m ,3}_{k,k_1,k_2}\|_{L^2}  + \sum_{i=1,2,3}  2^{3k/2 } \|J_{k,k_1,k_2}^{m,  2,i}\|_{L^2} \big)\big].
\ee

From estimate (\ref{eqn298}) in Lemma \ref{L2derivatieveestimate}, estimate  (\ref{eqn410})   in Lemma \ref{L2weightone}, and estimates (\ref{eqn760}) and (\ref{eqn879}) in Lemma \ref{L2weighttwo}, we have
\[
\textup{(\ref{eqn500})} \lesssim \epsilon_0+\sum_{1\leq m\leq \log(T)}\Big[\sum_{k\leq k_1+10, |k_1-k_2| \leq 10} 2^{2\gamma  k_1 +(\epsilon-\gamma) k -2k_{1,+}} \epsilon_1^2 \min\{ 2^{m+k+k_1}, 2^{-m/2-k/2-k_1/2 } \}
\]
\[   + \sum_{k_2\leq k -10 } 2^{\gamma k_2-2k_{2,+}+\epsilon k_{-}} \epsilon_1^2\big(\min\{2^{m+2k_2  }, 2^{-m/2-k_2 } \} +  \min\{ 2^{-m/2-k_2/2-k/2 }, 2^{m+k_2+k  }(1+2^{m+k_2+k  })\} \big)\Big]
\]
\[
\lesssim  \epsilon_0+ \sum_{1\leq m \leq \log(T)} \epsilon_1^2 \big[ \sum_{k_1 \geq \max\{-m-k,k-10\}} 2^{-m/2-k/2-k_{1}/2+2\gamma k_{1}+(\epsilon-\gamma)k -2k_{1,+}}\]
\[ + \sum_{k-10\leq k_1\leq -m -k} 2^{m+  (1+2\gamma) k_{1 } + (1+\epsilon-\gamma)k -2k_{1,+}}+ \sum_{k_2\geq -m-k} 2^{-m/2-k_2/2-k/2 +\epsilon k_{-}+ \gamma k_2-2k_{2,+}}
\]
\[
  + \sum_{k_2 \leq -m-k} 2^{m+k_2+k+\epsilon k_{-}+\gamma k_2-2k_{2,+}} (1+2^{m+k_2+k}) + \sum_{k_2 \geq -m/2} 2^{-m/2 -k_2 +\epsilon k_{-}+ \gamma k_2-2k_{2,+}} \]
\be\label{conclusion}
+ \sum_{k_2\leq -m/2} 2^{m+2k_2 +\epsilon k_{-} +  \gamma k_2-2k_{2,+}}\big]\lesssim \epsilon_0+ \sum_{1\leq m \leq \log(T)} 2^{-\gamma m/2}\epsilon_1^2 + 2^{-(\epsilon-\gamma) m}\epsilon_1^2  \lesssim \epsilon_0.
\ee
Hence finishing the proof. 
\begin{remark}
If we let $\epsilon=\gamma=0$,  note that above argument also works. From (\ref{conclusion2}) and (\ref{conclusion}), we have
\[
\| f(t)\|_{H^{10}}+ \| f(t)\|_{Z} \lesssim \epsilon_0 + \log T \epsilon_1^2.
\]
Hence the bootstrap argument still works out as long as $T \leq e^{c/\epsilon_0}$, where $c$ is a very small constant. Therefore, the method we developed here also proves the small data almost global existence of the $3D$ quadratic Schr\"odinger as follows, $(\p_t -i \Delta)u = |u|^2.$
 
\end{remark}

\begin{remark}
The main issue that prevents us to get the small data global existence of the $3D$ quadratic Schr\"odinger equation is the accumulated effect over time of the interaction of $\sqrt{t}\times \sqrt{t}\longrightarrow \sqrt{t}$ type. More precisely, both the input frequencies and the output frequencies are of size $\sqrt{t}$.	Therefore, certainly the upper bound $2^{\epsilon k_{-}}$ in  the assumption (\ref{assumption}) can be relaxed to $2^{\epsilon k_{1,-}}$. But it is not clear how to completely remove this assumption at this point. 
\end{remark}

\end{document}